\newcounter{iii}
\newcommand{\bb}{{\mathcal B}}
\newcommand{\aaa}{{\mathcal A}}
\newcommand{\mm}{\mathcal M}
\newcommand{\G}{\mathcal G}
\newcommand{\ff}{\mathcal F}
\theoremstyle{plain}
\newtheorem{thm}{Theorem}
\newtheorem{lem}[thm]{Lemma}
\newtheorem*{obs}{Observation}
\newtheorem{pro}{Problem}
\newtheorem{cor}[thm]{Corollary}
\theoremstyle{definition}
\numberwithin{equation}{section}
\numberwithin{thm}{section}
\title{Intersecting families with covering number $3$}
\author{Andrey Kupavskii}
\address{Moscow Institute of Physics and Technology, Saint-Petersburg State University, Innopolis University, Russia; Email: {\tt kupavskii@ya.ru}.}
\date{}
\begin{document}
\maketitle
\begin{abstract} The covering number of a family is the size of the smallest set that intersects all sets from the family. In 1978 Frankl determined for $n\ge n_0(k)$ the largest intersecting family of $k$-element subsets of $[n]$  with covering number $3$. In this paper, we essentially settle this problem, showing that the same family is extremal for any $k\ge 100$ and $n>2k$.
\end{abstract}

\section{Introduction}

For integers $a\le b$,  put $[a,b]:=\{a,a+1,\ldots, b\}$, and denote $[n]:=[1,n]$ for shorthand. For a set $X$, denote by $2^{X}$ its power set and, for integer $k\ge 0$,  denote by ${X\choose k} ( {X\choose \le k}$) the collection of all (at most) $k$-element subsets ({\it $k$-sets}) of $X$. A {\it family} is simply a collection of sets.  We call a family {\it intersecting}, if any two of its sets intersect. A `trivial' example of an intersecting family is the {\it full star}: the family of all sets containing a fixed element. The {\it covering number} $\tau(\ff)$ of a family $\ff$ is the size of the smallest $X$ such that $X\cap F\ne \emptyset$ for all $F\in \ff$. Each such $X$ we call a {\it cover} or a {\it hitting set}. An intersecting family is {\it trivial} or a {\it star} if it has covering number $1$. We call an intersecting family {\it non-trivial}, if the intersection of all sets from the family is empty. Alternatively, a family is non-trivial if its covering number is at least $2$.

One of the oldest and most famous results in extremal combinatorics is the Erd\H os--Ko--Rado theorem \cite{EKR}, which states that for  $n\ge 2k>0$ an intersecting family $\ff\subset {[n]\choose k}$ satisfies $|\ff|\le {n-1\choose k-1}$. Thus, the extremal example is a full star. Answering a question of Erd\H os, Ko, and Rado, Hilton and Milner \cite{HM} found the size and structure of the largest non-trivial intersecting families of $k$-sets. It has size %For $k\ge 4$, up to a permutation of the ground set, it must have the form $\mathcal H_{k}$, where for integer $2\le u\le k$
%\begin{equation}\label{eqhu}\mathcal H_u:=\ \Big\{A\in {[n]\choose k}\ :\ [2,u+1]\subset A\Big\}\cup\Big\{A\in{[n]\choose k}\ :\  1\in A, [2,u+1]\cap A\ne \emptyset\Big\}.\end{equation}
%$\mathcal H_k$ has size
 ${n-1\choose k-1}-{n-k-1\choose k-1}+1$ and has an element that intersects all but one set of the family.

For a family $\ff\subset 2^{[n]}$ and $i\in [n]$, the {\it degree} of $i$ in $\ff$ is the number of sets from $\ff$ containing $i$. Let $\Delta(\ff)$ stand for the {\it maximal degree} of an element in $\ff$. The {\it diversity} $\gamma(\ff)$ of $\ff$ is the quantity $|\ff|-\Delta(\ff)$. One may think of diversity as of the distance from $\ff$ to the closest star.  Frankl \cite{Fra1} proved a far-reaching generalization of the Hilton--Milner theorem. We present its slightly stronger variant due to Zakharov and the author \cite{KZ}.
\begin{thm}[\cite{KZ}]\label{thm1} Let $n>2k>0$ and $\ff\subset {[n]\choose k}$ be an intersecting family. If $\gamma(\ff)\ge {n-u-1\choose n-k-1}$ for some real $3\le u\le k$, then \begin{equation}\label{eq01}|\ff|\le {n-1\choose k-1}+{n-u-1\choose n-k-1}-{n-u-1\choose k-1}.\end{equation}
\end{thm}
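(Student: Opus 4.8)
\noindent\emph{Proof strategy.}
I would deduce Theorem~\ref{thm1} from an extremal property of the link of a maximum‑degree vertex. Fix $x\in[n]$ with $\deg_{\ff}(x)=\Delta(\ff)$ and split $\ff=\ff_x\sqcup\ff_{\bar x}$ into the members through $x$ and those avoiding $x$. The family $\hh:=\ff_{\bar x}$ of $k$‑subsets of $[n]\setminus\{x\}$ is intersecting and $|\hh|=|\ff|-\Delta(\ff)=\gamma(\ff)$. Since $\ff$ is intersecting and $x$ lies in no member of $\hh$, for each $F\in\ff_x$ the set $F\setminus\{x\}$ meets every member of $\hh$; hence, putting
\[
\mm:=\Bigl\{K\in\binom{[n]\setminus\{x\}}{k-1}\ :\ K\cap H\neq\emptyset\ \text{ for all }H\in\hh\Bigr\},
\]
we get $\ff_x\setminus\{x\}\subseteq\mm$ and therefore $|\ff|=|\ff_x|+|\hh|\le|\mm|+|\hh|$. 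As $|\hh|=\gamma(\ff)\ge\binom{n-u-1}{n-k-1}=:t_0$, it suffices to prove
\begin{equation}\label{eq:aim}
|\mm|+|\hh|\ \le\ \binom{n-1}{k-1}-\binom{n-u-1}{k-1}+t_0
\end{equation}
for every intersecting $\hh\subseteq\binom{[n]\setminus\{x\}}{k}$ with $|\hh|\ge t_0$. One checks, using $n>2k$, that \eqref{eq:aim} is an equality for the $u$‑\emph{umbrella} $\hh=\{H:S\subseteq H\}$ with $S$ a fixed $u$‑set (then $\mm=\{K:K\cap S\neq\emptyset\}$), and that the corresponding $\ff=\ff_x\sqcup\hh$ has size exactly the right side of \eqref{eq01} because $\binom{n-u-1}{n-k-1}=\binom{n-u-1}{k-u}$; this is the extremal family.

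\smallskip
\noindent\emph{Reformulation via shadows.}
Write $N:=n-1$ and identify $[n]\setminus\{x\}$ with $[N]$. A $(k-1)$‑set $K$ fails to belong to $\mm$ precisely when it is disjoint from some $H\in\hh$, i.e.\ when $K$ is contained in a member of the complementary family $\bb:=\{[N]\setminus H:H\in\hh\}\subseteq\binom{[N]}{N-k}$; thus $\binom{N}{k-1}-|\mm|$ equals the size of the iterated lower shadow of $\bb$ down to uniformity $k-1$ (meaningful since $N\ge 2k$, so $k-1\le N-k$). Inequality \eqref{eq:aim} then reads
\begin{equation}\label{eq:aim2}
\bigl|\partial^{(N-2k+1)}\bb\bigr|\ \ge\ |\bb|+\binom{N-u}{k-1}-\binom{N-u}{N-k},
\end{equation}
where $\bb$ is \emph{co‑intersecting}: the union of any two of its members is a proper subset of $[N]$ (equivalently, $\hh$ is intersecting). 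When $|\bb|=|\hh|=t_0$, \eqref{eq:aim2} is immediate from the Kruskal--Katona theorem in its real (Lov\'asz) form: $|\bb|=\binom{N-u}{N-k}$ forces the shadow at uniformity $k-1$ to have size at least $\binom{N-u}{k-1}$, which is \eqref{eq:aim2} with equality, and no use of co‑intersection is needed here.

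\smallskip
\noindent\emph{The main obstacle.}
The crux is to extend \eqref{eq:aim2} to all $|\hh|\ge t_0$: as $\hh$ grows, its ``non‑cover'' shadow must grow at least as fast. Kruskal--Katona alone does \emph{not} suffice, because the target shadow sits at uniformity $k-1$, which is strictly below the uniformity $N-k\ (\ge k>k-1)$ of $\bb$, so along the Kruskal--Katona extremizers the shadow size $\binom{y}{k-1}$ grows more slowly in $y$ than $|\bb|\approx\binom{y}{N-k}$, and ``shadow minus size'' is not monotone in $|\bb|$ across the whole relevant range (indeed $\binom{y}{k-1}-\binom{y}{N-k}$ is positive at $y=N-u$ but vanishes at $y=N-1$). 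The co‑intersecting hypothesis must enter: by the Erd\H os--Ko--Rado theorem applied to the complements one first gets $|\hh|\le\binom{N-1}{k-1}$, confining $|\bb|$ to a bounded window, and inside that window co‑intersection forces $\bb$ to be more ``spread out'' than the Kruskal--Katona extremizer, so its shadow beats the generic bound. I would carry this out through a case analysis on $\tau(\hh)$: for $\tau(\hh)\le u$ one compares $\hh$ with the $u$‑umbrella by an exchange/compression argument that tracks $|\mm|$ and $|\hh|$ simultaneously (a bare estimate on $|\mm|$, such as $|\mm|\le\binom{N}{k-1}-\binom{N-k}{k-1}$ from one member of $\hh$, sharpened using several members, never captures the needed trade‑off); and for $\tau(\hh)\ge u+1$ the family $\hh$ is forced to be small — small enough, by bounds on intersecting families with large covering number, that \eqref{eq:aim} holds with slack. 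The delicate point is the transitional range where $|\hh|$ lies strictly between $t_0$ and the Erd\H os--Ko--Rado maximum: there a naive induction on $|\hh|$ loses a factor of one, and one genuinely needs a co‑intersecting refinement of Kruskal--Katona (equivalently, a normal‑form reduction of $\bb$ to a colex‑like but co‑intersecting configuration) together with the elementary binomial identities that make the right side of \eqref{eq01} equal to the size of the umbrella family. The reduction and the shadow reformulation above are routine; all of the difficulty is concentrated in this last step.
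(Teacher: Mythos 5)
The paper does not prove this theorem; it is cited verbatim from \cite{KZ}, so there is no in-paper proof to compare against. Evaluating your attempt on its own terms: the opening reduction is correct and is the standard one. Picking $x$ of maximum degree and passing to the cross-intersecting pair $\aaa := \{F\setminus\{x\} : x\in F\in\ff\}\subseteq\binom{[n]\setminus\{x\}}{k-1}$ and $\bb := \ff(\bar x)\subseteq\binom{[n]\setminus\{x\}}{k}$, with $|\bb|=\gamma(\ff)\ge\binom{n-u-1}{n-k-1}$, reduces \eqref{eq01} to exactly the cross-intersecting inequality recorded as Corollary~\ref{corkz} in this paper (with $a=k-1$, $b=k$, $t=2$, $j=u$, ground set $[n-1]$). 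But your proposal never proves that inequality. You reformulate it as a lower bound on an iterated shadow of the complement family, observe correctly that Kruskal--Katona alone cannot deliver it (the shadow lands below the uniformity of $\bb$, and ``shadow minus size'' is not monotone over the relevant window), and then defer the entire content to an unstated ``co-intersecting refinement of Kruskal--Katona'' together with a case analysis on $\tau(\hh)$ that is only sketched. You say so yourself: ``all of the difficulty is concentrated in this last step.'' That step is the theorem; without it you have a reduction, not a proof.

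Two specific soft spots in the sketch. For $\tau(\hh)\ge u+1$ you assert that bounds on intersecting families with large covering number give \eqref{eq:aim} ``with slack,'' but those bounds control $|\hh|$, not $|\mm|$, and the sum $|\mm|+|\hh|$ is what must be bounded; the claim is plausible but not established. For $\tau(\hh)\le u$ the ``exchange/compression that tracks $|\mm|$ and $|\hh|$ simultaneously'' is precisely where a new idea is required, and it is not a routine strengthening of Kruskal--Katona: the real-parameter $u$ in the statement and the strictness/equality cases both need a compression whose effect on both sides of the trade-off is controlled, which is nontrivial. For comparison, the actual argument in \cite{KZ} does not go through shadows at all; it is a matching/regularity argument in the bipartite disjointness graph between $\binom{[n-1]}{k-1}$ and $\binom{[n-1]}{k}$, which yields the cross-intersecting bound directly, handles real $u$ by interpolation, and makes the equality analysis transparent. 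Your shadow reformulation is a legitimate alternative framing, but the normal-form lemma you would need to close it is missing, and that is a genuine gap.
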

We note that the Hilton--Milner theorem is included in Theorem~\ref{thm1}: simply put $u=k$. Theorem~\ref{thm1} provides essentially the strongest possible stability result for the Erd\H os--Ko--Rado theorem in the regime when the intersecting family is large (more precisely, for the families of size at least ${n-2\choose k-2}+2{n-3\choose k-2}$). There are several other stability results for the Erd\H os--Ko--Rado theorem, see, e.g. \cite{DT,EKL,Fri}. Recently, there were a series of results \cite{HK,KostM} in which the authors found a more fine-grained relationship between the structure of the intersecting family and its size (again, for large intersecting families). See the papers of the author \cite{Kup73, Kup74} for the most general and conclusive results. %\textcolor{red}{ADD ref to the new paper} %\textcolor{red}{We also refer to \cite{Kup73}, where more results on large intersecting families are obtained.}

Theorem~\ref{thm1} is stated in terms of diversity. The largest bound on diversity it gives is for $u=3$: $\gamma(\ff)\ge{n-4\choose k-3}$. Actually, for relatively large $n$, the diversity cannot be much bigger: I have showed \cite{Kup21} that $\gamma(\ff)\le {n-3\choose k-2}$ for $n>Ck$ with some large $C$, and then for $n>72k$ by Frankl \cite{Fdiv} and $n>36k$ by Frankl and Wang \cite{FWan}.

Another natural measure of how far the intersecting family is from the trivial family is the covering number. Intersecting families of $k$-sets with fixed covering number were studied in several classical works. The Erd\H os--Ko--Rado theorem shows that the largest intersecting family of $k$-element sets has covering number $1$. The  result of Hilton and Milner \cite{HM} determined the largest intersecting family with covering number $2$. It is clear that any $k$-uniform intersecting family $\ff$ satisfies $\tau(\ff)\le k$: indeed, any set of $\ff$ is a hitting set for $\ff$. In a seminal paper \cite{EL}, Erd\H os and Lov\'asz proved that an intersecting family $\ff\subset {[n]\choose k}$ with $\tau(\ff)=k$ has size at most $k^k$ (note that it is independent of $n$!) and provided a lower bound of size roughly $(k/e)^k$. Later, both lower \cite{FOT2} and upper \cite{Che, AR, Fra21, Z} bounds were improved.

In \cite{F16} (cf. also \cite{FT7}), Frankl studied the following general question: what is the size $c(n,k,t)$ of the largest intersecting family $\ff\subset {[n]\choose k}$ with $\tau(\ff)\ge t$?
Let us define the following important family.
\begin{equation}\label{deft2}
\mathcal T_2(k):=\big\{[k]\big\}\cup \big\{\{1\}\cup [k+1,2k-1]\big\}\cup \big\{\{2\}\cup [k+1,2k-1]\big\}.\end{equation}
It is easy to see that $\mathcal T_2(k)$ is intersecting, moreover, $\tau(\mathcal T_2(k))=2$.
Define $\mathcal C_3(n,k)\subset {[n]\choose k}$ to be the maximal intersecting family in which the subfamily of sets not containing $1$ is isomorphic to $\mathcal T_2(k)$. It is easy to see that $\tau(\mathcal C_3(n,k))=3$. Frankl proved the following theorem.

\begin{thm}[\cite{F16}]\label{thmtau33} Let $k\ge 3$ and $n\ge n_0(k)$. Then $c(n,k,3)= |\mathcal C_3(n,k)|$. Moreover, for $k\ge 4$ the equality holds only for families isomorphic to $\mathcal C_3(n,k)$.
\end{thm}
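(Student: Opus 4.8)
The plan is to reduce the statement to an extremal question about a single ``link'' family and then run a finite optimisation. Let $\ff\subset\binom{[n]}{k}$ be intersecting with $\tau(\ff)\ge3$, let $x$ be a vertex of maximum degree in $\ff$, write $d_\ff(v)$ for the degree of $v$ in $\ff$, and set $\G:=\ff(\bar x)$, the members of $\ff$ avoiding $x$. Since $\tau(\ff)\ge3$, the family $\G$ is non-trivial intersecting: were $\G$ a star with centre $y$, then $\{x,y\}$ would cover $\ff$. Now $|\ff|=d_\ff(x)+|\G|$ (so $|\G|=\gamma(\ff)$), and every $F\in\ff$ with $x\in F$ produces a distinct $(k-1)$-set $F\setminus\{x\}$ meeting every member of $\G$; hence $d_\ff(x)\le h(\G)$, where $h(\G)$ is the number of $(k-1)$-subsets of $[n]\setminus\{x\}$ meeting every member of $\G$. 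Thus $|\ff|\le|\G|+h(\G)$, and since inclusion--exclusion over the three sets of $\mathcal T_2(k)$ gives $|\mathcal C_3(n,k)|=3+h(\mathcal T_2(k))$, it is enough to prove $|\G|+h(\G)\le 3+h(\mathcal T_2(k))$ for every non-trivial intersecting $\G\subset\binom{[n]\setminus\{x\}}{k}$ that arises as such a link, with equality (for $k\ge4$) only when $\G\cong\mathcal T_2(k)$.

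First I would settle the case $|\G|=3$ by a finite optimisation. Let $G_1,G_2,G_3$ be the three sets, with pairwise intersection sizes $a=|G_2\cap G_3|$, $b=|G_1\cap G_3|$, $c=|G_1\cap G_2|$. Non-triviality forces the three pairwise intersections to be pairwise disjoint, so $|G_i\cup G_j|=2k-|G_i\cap G_j|$ and $|G_1\cup G_2\cup G_3|=3k-a-b-c$, while intersecting-ness gives $a,b,c\ge1$ and a count of the Venn regions gives $a+b,a+c,b+c\le k$. Inclusion--exclusion then writes
\[
h(\{G_1,G_2,G_3\})=\binom{n-1}{k-1}-3\binom{n-k-1}{k-1}+\binom{n-2k+a-1}{k-1}+\binom{n-2k+b-1}{k-1}+\binom{n-2k+c-1}{k-1}-\binom{n-3k+a+b+c-1}{k-1},
\]
and since $m\mapsto\binom{m}{k-1}$ is increasing and convex, for a fixed sum $a+b+c$ the three ``positive'' terms are maximised by pushing two of $a,b,c$ down to $1$, while the single ``negative'' term only decreases; increasing $a+b+c$ up to its maximum $k+1$ then gives $\{a,b,c\}=\{1,1,k-1\}$, i.e.\ $\G\cong\mathcal T_2(k)$, as the unique maximiser for $k\ge4$. (Every comparison reduces to $\binom{p}{k-2}>\binom{q}{k-2}$ for $p>q\ge k-2$; at $k=3$ two of the relevant configurations coincide, which is one source of the exceptional behaviour.) Since $h(\G)\le h(\{G_1,G_2,G_3\})$ for any three members, $|\G|=3$ already gives the desired bound.

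Next, the case $|\G|\ge4$, where one needs the sharper estimate $h(\G)\le h(\mathcal T_2(k))-(|\G|-3)$. If no triple inside $\G$ is within distance $|\G|$ of the extremal value of $h$, Step~2 supplies the slack; otherwise $\G$ contains an essentially-$\mathcal T_2(k)$ triple $G_1,G_2,G_3$, and then every further member of $\G$ deletes from $h(\{G_1,G_2,G_3\})$ all the $(k-1)$-sets meeting $G_1,G_2,G_3$ but missing it --- of which there are order $\binom{n-k-1}{k-1}$, far more than $|\G|-3$, provided $|\G|$ itself is not too large. And $|\G|$ cannot be too large once $\Delta(\G)$ is controlled, via $k\,|\G|=\sum_v d_\G(v)\le(n-1)\Delta(\G)$. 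To control $\Delta(\G)$ one uses that $x$ was chosen of maximum degree in $\ff$: if $d_\G(y)$ is large, then $\Delta(\ff)=d_\ff(x)\ge d_\ff(y)\ge d_\G(y)$, so $\ff$ has at least $d_\G(y)$ sets through $x$; but each such set that avoids $y$ must meet every member of the large family $\{G\setminus\{y\}:G\in\G,\ y\in G\}$, while those containing $y$ number at most $\binom{n-2}{k-2}$, and when $d_\G(y)$ is large the count of $(k-1)$-sets hitting the former family is too small to make up the difference. Tracking equalities through Steps~2--3 then pins $\ff$ down to $\mathcal C_3(n,k)$ for $k\ge4$.

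The main obstacle is precisely this last control of the link $\G$ in the ``intermediate'' regime --- when $\G$ is neither of bounded size nor a near-star --- and the interplay of the degree constraint above with the stability theory for large intersecting families. Here I would invoke Theorem~\ref{thm1} together with the bound $\gamma(\ff)\le\binom{n-3}{k-2}$ to bound $|\G|+h(\G)$ by $3+h(\mathcal T_2(k))$; this is the step that is comfortable for Frankl's range $n\ge n_0(k)$ but genuinely delicate if one wants $n>2k$ and $k\ge100$, not least because near $n=2k$ several of the binomial terms in $h(\mathcal T_2(k))$ vanish and the margins gained in Step~2 shrink almost to nothing.
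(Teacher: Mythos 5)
Your frame, bounding $|\ff|\le|\G|+h(\G)$ with $\G=\ff(\bar x)$ and reducing to an optimisation over $\G$, is sensible, but two of its steps are not sound as written. On the $|\G|=3$ computation: the reduction ``push two of $a,b,c$ to $1$, then increase $a+b+c$'' is not valid as a two-stage optimisation, because the constraints $a+b,a+c,b+c\le k$ allow $a+b+c$ to reach $\lfloor 3k/2\rfloor$, not $k+1$; you cannot simultaneously keep two coordinates at $1$ and slide the sum up to its global maximum. The conclusion that $\{a,b,c\}=\{1,1,k-1\}$ is optimal is right, but requires handling the genuine trade-off between the three convex terms and the single correction term. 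The paper avoids inclusion--exclusion altogether: Lemma~\ref{lemmin} counts $\ff$ by a layer-by-layer sweep through the transversal $\{i_2,\dots,i_z\}$ given by minimality and then through the remaining elements of $H_1$, which handles every minimal $\mathcal H$ uniformly and turns the comparison with $\mathcal T_2(k)$ into one binomial inequality per line.

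The more serious gap is the case $|\G|\ge4$. You need the quantitative estimate $h(\G)\le h(\mathcal T_2(k))-(|\G|-3)$, but the dichotomy you propose (``no near-extremal triple'' versus ``near-extremal triple plus lossy extra members'') is never made exhaustive, and the per-set loss of at least $1$ is asserted, not proved; the proposed control of $\Delta(\G)$ via the maximum-degree choice of $x$ is circular, since it presupposes bounds on $|\G|$ of exactly the kind you are trying to establish. Theorem~\ref{thm1} does not fill this hole: it gives an upper bound on $|\ff|$ only when the diversity $\gamma(\ff)=|\G|$ is \emph{large} (the easy regime), and it says nothing about $|\G|+h(\G)$ when $\gamma(\ff)$ is small, which is exactly where the theorem is tight. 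The paper dissolves the difficulty by a different device: the bipartite switching of Corollary~\ref{corkz} replaces, one bipartite class at a time, every set of $\ff(\bar 1)$ outside a fixed minimal $\mathcal M\subset\ff(\bar 1)$ with $\tau(\mathcal M)=2$ by $(k-1)$-sets containing $1$, never decreasing $|\ff|$ and preserving $\tau(\ff)=3$, until $\ff(\bar 1)=\mathcal M$. Only then does it optimise over minimal $\mathcal M$ via Lemma~\ref{lemmin}; the problematic ``many sets in $\G$'' regime is never faced head-on. Without some analogue of that switching step, your proposal does not close.
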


Frankl proved this theorem using the Delta-system method, which was behind many of the breakthroughs in extremal set theory in the 1970s and 80s. If one analyzes the method used by Frankl directly, then the bound on $n$ is doubly exponential in $k$ if one analyzes the proof in \cite{F16}. Using a refined variant of the $\Delta$-system method, one can get the bound down to $n$ polynomial in $k$, but the polynomial will most likely be at least cubic. The main result of this paper is an extension of the theorem above into an essentially full range of parameters.

\begin{thm}\label{thmtau3} The conclusion of Theorem~\ref{thmtau33} holds for any $k\ge 100$, $n>2k$.
\end{thm}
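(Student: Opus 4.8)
The plan is to reduce the whole question to an optimisation over the ``non-star part'' of $\ff$. Assume $\tau(\ff)=3$ (the case $\tau(\ff)\ge4$ runs the same way with strictly stronger hypotheses on the pieces below, so it is not extremal). Let $x$ be an element of maximum degree and put $\G:=\{F\in\ff:x\notin F\}$, so $|\ff|=\Delta(\ff)+\gamma$ with $\gamma:=\gamma(\ff)=|\G|$. Since $\tau(\ff)=3$, no pair $\{x,y\}$ covers $\ff$, hence $\G$ is a non-trivial intersecting family with $\tau(\G)\ge2$; moreover each set of $\ff$ through $x$, read as a $(k-1)$-set on $[n]\setminus\{x\}$, meets every member of $\G$, so $\Delta(\ff)\le N(\G)$, where $N(\G)$ denotes the number of such $(k-1)$-sets meeting all of $\G$. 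Thus $|\ff|\le\gamma+N(\G)$; since $\mathcal C_3(n,k)$ is by construction a star together with a copy of $\mathcal T_2(k)$, we have $|\mathcal C_3(n,k)|=3+N(\mathcal T_2(k))$, and the task becomes to prove $\gamma+N(\G)\le3+N(\mathcal T_2(k))$, with equality (plus $\Delta(\ff)=N(\G)$) forcing $\G\cong\mathcal T_2(k)$.

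First I would dispose of large diversity. If $\gamma\ge\binom{n-4}{k-3}$, Theorem~\ref{thm1} with $u=3$ gives $|\ff|\le\binom{n-1}{k-1}+\binom{n-4}{k-3}-\binom{n-4}{k-1}$; a routine binomial comparison shows this is strictly below $|\mathcal C_3(n,k)|$ as long as $n$ is not too large compared with $k$ (the two sides agree at $n=2k$ and the gap stays comfortable up to roughly $n=\Theta(k^2)$). For larger $n$ Theorem~\ref{thm1} alone no longer suffices — its bound is of order $n^{k-2}$ while $|\mathcal C_3(n,k)|$ is of order $n^{k-3}$ — and instead one has to show that $\tau(\ff)=3$ itself prevents $\gamma$ from being that large in this range: this is where I expect to use the cited diversity bound $\gamma(\ff)\le\binom{n-3}{k-2}$ (valid for $n>36k$, hence here) together with the fact that the many sets of $\ff$ through $x$ avoiding a fixed high-degree element $a$ of $\G$ are all $(k-1)$-element transversals of the $a$-residuals $\{A\setminus\{a\}:a\in A\in\G\}$, which pins down the structure of $\G$ well enough to bring $\gamma$ below $\binom{n-4}{k-3}$.

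This leaves the substantial case, $\gamma$ small. Here the leverage is that we only need to treat $\ff$ with $|\ff|\ge|\mathcal C_3(n,k)|$, which makes $N(\G)=\Delta(\ff)$ large, i.e.\ forces the ``miss-count'' $\binom{n-1}{k-1}-N(\G)$ of the non-trivial family $\G$ to be tiny — a very strong structural constraint. I would use it, via inclusion--exclusion estimates on the miss-count together with the basic anatomy of non-trivial intersecting families, to show that $\tau(\G)=2$ with a $2$-cover $\{a,b\}$ and that the parts $\G_a,\G_b$ of members of $\G$ containing exactly one of $a,b$ are small and each ``point into'' a single $(k-1)$-set — precisely the shape of $\mathcal T_2(k)$. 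The non-obvious ingredient is that $\tau(\ff)=3$ must be used genuinely, not just $\tau(\G)\ge2$: a too-concentrated $\G$ — a Hilton--Milner family, or the ``triangle'' $\{F:|F\cap\{1,2,3\}|\ge2\}$ — keeps $N(\G)$ fairly large but drops $\tau(\ff)$ to $2$; so $\tau(\ff)=3$ both caps $N(\G)$ from above and kills the dangerous near-stars. Once $\G$ is cut down to a bounded list of shapes, parametrised by the pairwise intersection sizes of its members and by $|\G_a|,|\G_b|$, one computes $N(\G)$ exactly in each case and checks $\gamma+N(\G)<3+N(\mathcal T_2(k))$ except for $\G\cong\mathcal T_2(k)$, strictly so once $k\ge4$, which also yields the uniqueness statement.

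The main obstacle is this small-diversity analysis, sharpened by the fact that the claim must hold down to $n=2k+1$. For $n$ close to $2k$, $|\mathcal C_3(n,k)|$ is of the order of a full star and is only barely larger than what several competing near-stars produce (for instance $\G$ with $\gamma$ of order $\sqrt k$, or $\G$ whose members have moderately large pairwise unions), so the binomial estimates are tight and one genuinely needs the exact value of $N(\G)$ rather than a bound. Threading the $\tau(\ff)=3$ constraint through the structural reduction sharply enough to eliminate every over-concentrated $\G$, while still leaving enough room to isolate $\mathcal T_2(k)$ and strictly beat all of its competitors uniformly for $n>2k$ and $k\ge100$, is the technically heavy part; I expect it to require a fairly long case analysis and a handful of ad hoc binomial inequalities.
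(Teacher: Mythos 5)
Your opening framework matches the paper's: write $|\ff|\le\gamma(\ff)+N(\G)$ where $\G=\ff(\bar x)$ and $N(\G)$ counts the $(k-1)$-sets on $[n]\setminus\{x\}$ meeting all of $\G$, observe $|\mathcal C_3(n,k)|=3+N(\mathcal T_2(k))$, and split by the size of $\gamma(\ff)$. The large-diversity case for $n=O(k^2)$ via Theorem~\ref{thm1} is also the paper's route (the paper uses $u=4$ with threshold $\binom{n-5}{k-3}$; you use $u=3$; either can be made to work, though the ``routine binomial comparison'' you invoke is in fact a three-way case split occupying a page, since, as you note yourself, the two sides coincide at $n=2k$ and the margin near $n=2k+1$ is very thin). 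Beyond that framing, however, there are two genuine gaps.

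First, the large-diversity case for $n\gg k^2$ is not handled. You correctly observe that Theorem~\ref{thm1}'s bound ($\sim n^{k-2}$) is useless against $|\mathcal C_3(n,k)|\sim k^2 n^{k-3}$ there, and you propose to import the diversity bound $\gamma(\ff)\le\binom{n-3}{k-2}$ and ``pin down the structure of $\G$ well enough to bring $\gamma$ below $\binom{n-4}{k-3}$.'' But $\binom{n-3}{k-2}$ exceeds $\binom{n-4}{k-3}$ by a factor of order $n/k$, so this bound alone does nothing, and the sentence about $(k-1)$-transversals of $a$-residuals is a placeholder, not an argument. The paper closes this case with the peeling procedure (iteratively replacing by a maximal intersecting family and removing the top layer, exploiting $r$-spreadness), which yields that if $\tau(\ff)\ge3$ and $\gamma(\ff)>\binom{n-5}{k-3}$ then $\ff$ decomposes into at most $4k$ restrictions $\ff[B_i]$ with $|B_i|=3$ plus $O(1)\cdot\binom{n-3}{k-3}$ leftover sets, giving $|\ff|\le(4k+250)\binom{n-3}{k-3}\ll|\mathcal C_3(n,k)|$. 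Nothing in your sketch reaches an analogous quantitative conclusion.

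Second, in the small-diversity case the core difficulty is not the one you identify. You propose to bound the ``miss-count'' $\binom{n-1}{k-1}-N(\G)$ by inclusion--exclusion and thereby reduce $\G$ to a ``bounded list of shapes.'' But $\G$ is an arbitrary $k$-uniform intersecting family with $\tau(\G)\ge2$ and up to $\binom{n-5}{k-3}$ members; the reduction to a finite shape list is precisely what is missing. The paper achieves it by an explicit sequence of bipartite exchange operations (Corollary~\ref{corkz}) that monotonically increase $|\ff|$, preserve $\tau=3$, and shrink $\ff(\bar 1)$ until it equals a \emph{minimal} covering family $\mathcal M$ with $\tau(\mathcal M)=2$; by Bollob\'as's set-pair inequality $|\mathcal M|\le k+1$, and then Lemma~\ref{lemmin}'s telescoping count, anchored on a transversal of $\mathcal M$, compares $N(\mathcal M)$ for all such $\mathcal M$ term by term and isolates $\mathcal T_2(k)$ as the unique optimum. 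Note that even after the reduction the number of candidate $\mathcal M$ grows with $k$, so ``bounded list'' is false, and a brute-force case analysis over intersection patterns would not terminate uniformly in $k$. Your proposal does not contain the exchange step or anything playing the role of Lemma~\ref{lemmin}, and without them the small-diversity case is open.
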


The case $n=2k$ is special. If $n=2k$ then all maximal intersecting families have size $\frac 12{2k\choose k}$ and are obtained by taking exactly one set out of each pair of complementary sets. If, say, we select these sets randomly then for a somewhat large $k$ with probability close to $1$ we will obtain a family with covering number at least $3$ (or bigger than, say, $k/10$).

In the paper \cite{FOT1}, the authors managed to extend the result of \cite{F16} to the case $\tau =4$, determining the exact value of $c(n,k,4)$ and the structure of the extremal family for $n>n_0(k)$. The analysis in \cite{FOT1} is much more complicated than that in \cite{F16}, and the problem for $\tau\ge 5$ is still wide open. It is possible that the result of \cite{FOT1} may be extended to much smaller $n$ using the techniques from this paper. In the case of $\tau\ge 5$, however, we do not even know the answer for $n>n_0(k)$, and this appears to us the most challenging problem in this direction. The main difficulty for $t\ge 5$ lies in the following problem.

\begin{pro}\label{propbol} Given an intersecting family $\ff$ of $k$-sets with $\tau(\ff)=t$, what is the maximum number of hitting sets of size $t$ it may have?
\end{pro}

\vskip+0.1cm
{\bf Remark: } A weaker version of Theorem~\ref{thmtau3} (the same result for $n>Ck$ with unspecified $C$ coming from the junta method), along with many of the proof ideas, appeared in an unpublished manuscript of the author \cite{Kup73}. Actually, in \cite{Kup73} I posed a problem to determine $c(n,k,3)$ for all $n>2k\ge 8$. Thus, Theorem~\ref{thmtau3} essentially answers this question. %(I should also note that most of the ideas needed for this result were already present in the paper \cite{Kup73}.)
 I decided to split the manuscript and publish it separately because of its heterogeneous nature and, importantly, because its first part, dedicated to the complete version of Frankl's degree theorem, turned out to have been obtained earlier in an unjustly overlooked paper by Goldwasser \cite{Gold}.\vskip+0.1cm

{\bf Remark 2: } After I submitted the paper to the journal, it was pointed to me by one of the referees that recently Frankl and Wang obtained essentially the same result in \cite{FW23}. (Their paper was uploaded to arXiv in September 2023, previous versions of that paper contained weaker results.) The approaches are very different, and the present proof is significantly shorter and has less case analysis/calculations. The approach of Frankl and Wang relies on shifting, an intricate analysis of shift-resistant pairs (i.e., pairs of elements over which we cannot do shifts without losing the $\tau\ge 3$ property), and a lot of calculations that exploit certain partial structural information on the family. In this paper, I develop the bipartite switching technique and combine it with ideas coming from spread approximations. This approach seems to be quite flexible and potentially useful in many other extremal set theory problems.

%\textcolor{red}{A comment on the previous paper}

\section{Proof of Theorem~\ref{thmtau3}}
\subsection{Outline of the approach}
Take an intersecting family $\ff\subset {[n]\choose k}$ with covering number $3$. The proof of the theorem bifurcates based on whether $\gamma(\ff)$ is large or small. (Concretely, whether $\gamma(\ff)>{n-5\choose k-3}$ or not.)

In the case of large diversity, we show that $\ff$ is significantly smaller than the family $\mathcal C_3(n,k)$ that is expected to be extremal. For $n<2k^2$ the family $\mathcal C_3(n,k)$ is still quite large, and it is sufficient to use the bound \eqref{eq01} to show that $|\ff|$ is small.

For $n>2k^2$, we use the recently developed `peeling' procedure, which is an independent and apparently useful part of the spread approximation technique \cite{KuZa}, which was recently upgraded to be much more efficient \cite{Kup77}. We say that a (non-uniform) family is maximal intersecting if no set can be replaced by a proper subset without violating the intersection property. Peeling is an iterative procedure in which we replace the family by a maximal intersecting family and then remove (peel) the layer of the largest sets. It allows for an efficient control of the structure of the family. In a way, it is an improved analogue of the Delta-system method as used by, say, Frankl in \cite{F16}. Back to the proof, the fact that $\ff$ has large diversity implies that we can peel the family up to a constant layer without the remaining family getting trivial (i.e., being replaced by one singleton). This at the end allows us to efficiently control the number of different $3$-element covers of $\ff$, which turns out to be much smaller than that for $\mathcal C_3(n,k)$.  As a result, the size of $\ff$ is much smaller than that of $\mathcal C_3(n,k)$.

The second case is small diversity. The key idea in this case is an extension of the bipartite switching idea, which was introduced in \cite{KZ} (similar ideas appeared earlier in \cite{FK1}). Its application is based on Corollary~\ref{corkz} for cross-intersecting families (essentially, the two cross-intersecting families are restrictions of $\ff(1)$ and $\ff(\bar 1)$, where $1$ is the element with the highest degree).  We carefully change the family $\ff$ so that its size does not decrease and the covering number is preserved, but $\ff(\bar 1)$ gets smaller and smaller. We actually start with a minimal subfamily $\mathcal M\subset \ff(\bar 1)$ with covering number $2$ and at the end of the procedure we get that $\ff(\bar 1)$ coincides with $\mathcal M$.

The last step is to show that the optimal minimal $\mathcal M$ for $\ff(\bar 1)$ is isomorphic to $\mathcal T_2(k)$. This is done in  Lemma~\ref{lemmin}. In order to show this, we found an elegant way to upper bound the size of $\ff$ doing a specific size count of the size of the family $\ff(1)$ that cross-intersects $\mathcal M$ and compare it term by term with an analogous count for $\mathcal T_2(k)$.

\subsection{Preliminaries}
For a family $\aaa\subset 2^{X}$ and a set $Y\subset X$, we use the following  notation:
\begin{align*}
  \aaa(Y) =&\big\{F\setminus Y: Y\subset F, F\in \aaa\big\}, \\
  \aaa[Y] =&\big\{F: Y\subset F, F\in \aaa\big\}, \\
  \aaa(\bar Y) =& \big\{F: Y\cap F=\emptyset, F\in \aaa\big\}.
\end{align*}
Note the difference between $\aaa(Y)$ and $\aaa[Y]$: we exclude $Y$ from the sets in the former and include it in the latter. For an element $x$ we write $\aaa(x)$, $\aaa(\bar x)$ instead of $\aaa(\{x\}),\aaa(\bar{\{x\}})$ for shorthand. For two families $\aaa,\G$ we also use the following notation:
$$\aaa[\G] = \cup_{F\in \G}\aaa[F].$$
The families $\aaa,\bb$ are {\it cross-intersecting} if for any $A\in\aaa,B\in\bb$ we have $A\cap B\ne \emptyset$. The following result was essentially (modulo uniqueness) obtained in \cite{KZ}, and is a consequence of a more general and stronger result from \cite{Kup73}.
\begin{cor}[\cite{KZ}]\label{corkz}
  Let $a,b>0$, $n>a+b$. Let $\aaa\subset {[n]\choose a},\ \bb\subset {[n]\choose b}$ be a pair of cross-intersecting families. Denote $t:=b+1-a.$ Then, if $|\bb|\le {n-t\choose a-1}$, then
\begin{equation}\label{eqcreasy} |\aaa|+|\bb|\le {n\choose a}.\end{equation}
Moreover, the displayed inequality is strict unless $|\bb|=0$.

If ${n-j\choose b-j}\le |\bb|\le {n-t\choose a-1}$ for integer $j\in [t, b]$, then
\begin{equation}\label{eqcreasy2} |\aaa|+|\bb|\le {n\choose a}-{n-j\choose a}+{n-j\choose b-j}.\end{equation}
Moreover, if the left inequality on $\bb$ is strict, then the inequality in the displayed formula above is also strict, unless $j=t+1$ and $|\bb| = {n-t\choose a-1}$.
\end{cor}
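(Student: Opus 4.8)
The plan is to reduce the inequality to a shadow estimate and apply the Kruskal--Katona theorem. Since we are bounding $|\aaa|+|\bb|$ over \emph{all} cross-intersecting pairs, we may assume $\bb\neq\emptyset$ (otherwise both claims are trivial) and that $\aaa$ is as large as possible for the given $\bb$, i.e.\ $\aaa=\{A\in\binom{[n]}{a}:A\cap B\neq\emptyset\text{ for all }B\in\bb\}$. Pass to complements: put $\mathcal C:=\{[n]\setminus B:B\in\bb\}\subset\binom{[n]}{n-b}$, which is legitimate because $n>a+b$ forces $n-b>a$. An $a$-set $A$ lies outside $\aaa$ precisely when $A\subseteq[n]\setminus B$ for some $B\in\bb$, that is, precisely when $A$ lies in the $a$-element shadow $\partial_a(\mathcal C):=\{A\in\binom{[n]}{a}:A\subseteq C\text{ for some }C\in\mathcal C\}$. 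Hence $|\aaa|=\binom{n}{a}-|\partial_a(\mathcal C)|$, and writing $m:=|\bb|=|\mathcal C|$ we get $|\aaa|+|\bb|=\binom{n}{a}-\bigl(|\partial_a(\mathcal C)|-m\bigr)$. Thus \eqref{eqcreasy} is equivalent to $|\partial_a(\mathcal C)|\ge m$, and \eqref{eqcreasy2} to $|\partial_a(\mathcal C)|-m\ge\binom{n-j}{a}-\binom{n-j}{b-j}$.

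To lower-bound $|\partial_a(\mathcal C)|$, I would use the Kruskal--Katona theorem in Lov\'asz's form: writing $m=\binom{x}{n-b}$ for the unique real $x\ge n-b$ and iterating the one-level shadow bound from level $n-b$ down to level $a$ gives $|\partial_a(\mathcal C)|\ge\binom{x}{a}$. The hypothesis $m\le\binom{n-t}{a-1}=\binom{n-t}{n-b}$ (note $n-t=n-b+a-1$) forces $x\le n-t$. Since $\binom{x}{n-b}/\binom{x}{a}=\prod_{i=a+1}^{n-b}\frac{x-i+1}{i}$ is increasing in $x$ and equals $\frac{a}{n-b}$ at $x=n-t$, we obtain $\binom{x}{n-b}\le\frac{a}{n-b}\binom{x}{a}<\binom{x}{a}$, which gives \eqref{eqcreasy} together with its strictness clause. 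For \eqref{eqcreasy2}, the extra hypothesis $m\ge\binom{n-j}{b-j}=\binom{n-j}{n-b}$ forces $x\ge n-j$; and inspecting the conjectured extremal pair ($\bb=$ all $b$-sets containing a fixed $j$-set, $\aaa=$ all $a$-sets meeting that $j$-set) shows that the target right-hand side equals $\binom{n}{a}-g(n-j)$, where $g(x):=\binom{x}{a}-\binom{x}{n-b}$. So it remains to prove $g(x)\ge g(n-j)$ for every real $x\in[n-j,\,n-t]$.

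This last step, together with the uniqueness assertions, is where the actual work lies, and is the main obstacle. The function $g$ is not monotone on $[n-j,n-t]$ (it bulges upward strictly between consecutive integers), but at integer points $g(r)-g(r-1)=\binom{r-1}{a-1}-\binom{r-1}{n-b-1}$, which is nonnegative exactly when $r\le n-t$; hence $g$ is non-decreasing along the integers of $[n-j,n-t]$, and a short further argument (e.g.\ via convexity of $x\mapsto\binom{x}{a}$ and $x\mapsto\binom{x}{n-b}$ on the relevant range) shows that $g$ attains its minimum on the whole interval at the left endpoint $x=n-j$. One may instead avoid the real parameter: by Kruskal--Katona one can replace $\mathcal C$ by an initial segment of the colexicographic order on $\binom{[n]}{n-b}$ without increasing $|\partial_a(\mathcal C)|-m$, after which the required inequality reduces to an explicit comparison of cascade sums of binomials. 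For uniqueness one traces the equality case of Kruskal--Katona (forcing $\mathcal C$ to be such a colex initial segment, hence $\bb$ a family of $b$-sets through a fixed set) together with the equality cases of the estimates above; the exceptional case ``$j=t+1$, $|\bb|=\binom{n-t}{a-1}$'' in the statement is precisely the point where the extremal configurations for $j=t$ and $j=t+1$ coincide, which a one-line Pascal-identity check confirms at the level of the right-hand sides of \eqref{eqcreasy2}.
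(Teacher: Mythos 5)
The paper does not actually prove Corollary~\ref{corkz}: it cites \cite{KZ} for the inequalities and \cite[Theorem 2.12, part 3]{Kup73} for the equality characterization, so there is no in-paper proof to compare against. Your reduction, on the other hand, is the classical Frankl--Tokushige route via complements and the Kruskal--Katona theorem, and the algebra up through the proof of \eqref{eqcreasy} (including its strictness) checks out: the identification $|\aaa|+|\bb|=\binom{n}{a}-(|\partial_a(\mathcal C)|-m)$ is right, the Lov\'asz parametrization $m=\binom{x}{n-b}$ with $|\partial_a(\mathcal C)|\ge\binom{x}{a}$ is legitimately iterated, the window $x\in[n-j,n-t]$ is correct, and the ratio computation $\binom{n-t}{n-b}/\binom{n-t}{a}=a/(n-b)<1$ is correct.

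The genuine gap is exactly where you flag it: you have not proved that $g(x):=\binom{x}{a}-\binom{x}{n-b}$ satisfies $g(x)\ge g(n-j)$ for all \emph{real} $x\in[n-j,n-t]$, and the parenthetical ``e.g.\ via convexity of $x\mapsto\binom{x}{a}$ and $x\mapsto\binom{x}{n-b}$'' does not on its own give this: $g$ is a difference of two convex functions and hence has no obvious convexity, concavity, or unimodality on the interval, so ``the minimum is at the left integer endpoint'' requires an actual estimate (e.g.\ bounding $g'$ or interpolating via the cascade representation of $m$ rather than the single real parameter $x$). The alternative you mention --- pass to a colex initial segment and compare cascade sums --- is the sounder route but is likewise left unexecuted; it is not a cosmetic omission because the inequality \eqref{eqcreasy2} is tight, so no slack is available. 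Separately, the uniqueness sketch (``trace the equality case of Kruskal--Katona'') is not yet an argument: the Lov\'asz-form bound is used at every level of the shadow and is generically lossy, and the full equality characterization of Kruskal--Katona is itself a delicate theorem whose conclusion is not simply ``$\mathcal C$ is a colex initial segment''. The Pascal-identity check reconciling the $j=t$ and $j=t+1$ right-hand sides is correct and a nice observation, but it is the easy half of the exceptional case, not the uniqueness proof. As written, then, the proposal is a correct and well-organized outline with the central analytic lemma and the equality analysis both missing.
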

The result in \cite{KZ} did not explicitly treat the equality case. However, it is clear that strictness of \eqref{eqcreasy} follows from \eqref{eqcreasy2}, and the equality case in \eqref{eqcreasy2} follows from \cite[Theorem 2.12, part 3]{Kup73}.

Let us  recall the definition of shifting. For a given pair of indices $1\le i<j\le n$ and a set $A \subset [n]$, define its {\it$(i,j)$-shift} $S_{ij}(A)$ as follows. If $i\in A$ or $j\notin A$, then $S_{ij}(A) = A$. If $j\in A, i\notin A$, then $S_{ij}(A) := (A-\{j\})\cup \{i\}$. That is, $S_{ij}(A)$ is obtained from $A$  by replacing $j$ with $i$.
The  $(i,j)$-shift $S_{ij}(\mathcal A)$ of a family $\mathcal A$ is as follows:
$$S_{ij}(\mathcal A) := \{S_{ij}(A)\ :\  A\in \mathcal A\}\cup \{A\ :\  A,S_{ij}(A)\in \mathcal A\}.$$
Shifting is a very useful operation in the study of a class of extremal set theoretic problems. We refer to \cite{F3} for a survey. Shifting preserves the sizes of sets and the size of the family: $|S_{ij}(\aaa)| = |\aaa|$. Importantly, if $\aaa$ is intersecting then $S_{ij}(\aaa)$ is intersecting. Unfortunately, shifting can potentially reduce the covering number and is thus not directly applicable to our problem. However, with a certain amount of care, it is still possible to use, which we do in this paper.

The following lemma is one of the key ingredients in the proof of the theorem. It allows us to compare different intersecting families with `minimal' $\ff(\bar 1)$. Let us first give some definitions.
Given integers $m>2s$, let us denote by $\mathcal T_2'(s):=\{[s], [s+1,2s]\}$. Let $\ff_2'(s)\subset {[m]\choose k-1}$ stand for the largest family that is cross-intersecting with $\mathcal T_2'(s)$. Let $\ff_2(s)\subset {[m]\choose k-1}$ stand for the largest family that is cross-intersecting with $\mathcal T_2(s)$  (cf. \eqref{deft2}).

\begin{lem}\label{lemmin} Let $k\ge s$ and $m\ge k+s$ be integers, $k\ge 4$. Given a family $\mathcal H\subset {[m]\choose s}$ with $\tau(\mathcal H)=2$ and minimal w.r.t. this property, consider  the maximal family $\ff\subset {[m]\choose k-1}$ that is cross-intersecting with $\mathcal H$. Then the unique maximum of $|\ff|+|\mathcal H|$ is attained when $\mathcal H$ is isomorphic to $\mathcal T_2'(s)$ (and $\ff$ is thus isomorphic to $\ff_2'(s)$).

If we additionally require that $\mathcal H$ is intersecting\footnote{Note that this is equivalent to requiring that $|\mathcal H|>2$.} then the maximum of $|\ff|+|\mathcal H|$ is attained for $\mathcal H$ and $\ff$ isomorphic to $\mathcal T_2(s)$ and $\ff_2(s)$. The maximal configuration is unique if $s\ge k$.
\end{lem}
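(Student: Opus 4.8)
The plan is to reduce, via shifting, to a situation where $\mathcal H$ has a very rigid structure, and then to exploit the cross-intersecting bounds from Corollary~\ref{corkz} term by term. Since $\tau(\mathcal H)=2$ and $\mathcal H$ is minimal with respect to this property, every set of $\mathcal H$ is needed to keep $\tau$ from dropping to $1$, so $|\mathcal H|\le s+1$ (for each element $x$ there must be a set avoiding $x$, but the sets avoiding a fixed element pairwise intersect a small common transversal…). First I would run the $(i,j)$-shift on $\mathcal H$, being careful: shifting may a priori destroy $\tau(\mathcal H)=2$ or minimality, but one can argue that either the shift keeps $\tau=2$ and we may pass to the shifted family (replacing $\ff$ by its shift as well, which keeps the cross-intersecting relation and does not decrease $|\ff|$), or the shift would make $\tau=1$, in which case the pre-shift family already has a very restricted form that we can analyze by hand. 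After shifting we may assume $\mathcal H$ is \emph{shifted}; a shifted family with $\tau=2$ and minimal has one of a small number of explicit shapes — the pair $\{[s],[s+1,2s]\}$, or $\mathcal T_2(s)$, or degenerations thereof (e.g. three or more sets through a common pair of elements), and the claim is that among all of these $|\ff|+|\mathcal H|$ is maximized by $\mathcal T_2'(s)$, and by $\mathcal T_2(s)$ if we insist $\mathcal H$ be intersecting.

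The heart of the argument is the comparison step. Writing $\ff$ as the maximal family in $\binom{[m]}{k-1}$ cross-intersecting with $\mathcal H$, we have $\ff=\{F : F\cap H\ne\emptyset \text{ for all }H\in\mathcal H\}$, so $\binom{[m]}{k-1}\setminus\ff$ consists of the $(k-1)$-sets missing some $H\in\mathcal H$. The idea is to \emph{stratify} the $(k-1)$-sets by their trace on $U:=\bigcup_{H\in\mathcal H}H$ (a set of size at most $2s$ in the two-set case, $2s-1$ in the $\mathcal T_2$ case), and for each possible trace $T\subset U$ count $\binom{m-|U|}{k-1-|T|}$ times an indicator of whether $T$ already hits all of $\mathcal H$. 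Thus $|\ff|=\sum_{T}[\text{$T$ hits $\mathcal H$}]\binom{m-|U|}{k-1-|T|}$, and similarly for $\mathcal T_2'(s)$ and $\mathcal T_2(s)$. The task becomes a \emph{combinatorial domination} statement: the generating function (in the variable tracking $|T|$) of transversals of $\mathcal H$ inside $U$ is dominated, coefficient by coefficient after accounting for the size of $U$, by that of $\mathcal T_2'(s)$. Here one uses that $\mathcal T_2'(s)$ has the smallest possible union ($|U|=2s$) and the ``spread-out'' structure that maximizes the number of small transversals; any other minimal $\tau=2$ family either has a larger union (hence each term $\binom{m-|U|}{\cdot}$ is smaller) or fewer small transversals. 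Corollary~\ref{corkz}, applied with $a=k-1$, $b=s$, furnishes the clean closed form $|\ff|+|\mathcal H| = \binom{m}{k-1} - \binom{m-t}{k-1} + \binom{m-t}{s-t}$ with $t=s+1-(k-1)=s-k+2$ when $\mathcal H$ is (isomorphic to) $\mathcal T_2'(s)$ — this pins down the candidate optimum — and the general upper bound then follows by showing every other $\mathcal H$ gives strictly less, with the strictness tracked through the strict cases in Corollary~\ref{corkz}.

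For the intersecting refinement one repeats the stratification with the constraint that $\mathcal H$ itself be intersecting (equivalently $|\mathcal H|\ge 3$), which forbids the two-set configuration $\{[s],[s+1,2s]\}$; among the remaining shifted minimal intersecting families with $\tau=2$, the same term-by-term comparison singles out $\mathcal T_2(s)$. Uniqueness when $s\ge k$: in that regime the binomial coefficients $\binom{m-|U|}{k-1-|T|}$ are all genuinely present and positive for the relevant range of $|T|$, so the domination is strict whenever the trace-generating functions differ, forcing $\mathcal H\cong\mathcal T_2(s)$; when $s<k$ some high-order terms vanish and a second, smaller extremal configuration can sneak in, which is why uniqueness is only claimed for $s\ge k$.

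The main obstacle I anticipate is \textbf{controlling the shifting step without losing $\tau=2$ or minimality} — shifting is not monotone for covering number, so one must either show directly that a well-chosen sequence of shifts preserves the property or enumerate by hand the (few) obstructing ``almost-shifted'' minimal families. The second genuine difficulty is making the term-by-term domination \emph{rigorous and tight}: one needs the precise list of shifted minimal $\tau=2$ families on a ground set of size $\le 2s$, and for each a clean description of its transversal-size generating polynomial, so that the comparison with $\mathcal T_2'(s)$ (resp. $\mathcal T_2(s)$) reduces to an elementary inequality between binomial sums — routine once set up, but the setup requires care, especially in tracking when equality can occur.
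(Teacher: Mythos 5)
Your sketch diverges substantially from the paper's argument, and as written it has several gaps that I don't think can be patched without essentially reinventing the paper's approach.

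First, the reduction via shifting is not carried out and is genuinely problematic. You acknowledge that shifting may destroy $\tau(\mathcal H)=2$ or minimality, and say ``one can argue'' around this, but you give no argument. Worse, even if you could shift, there is no reason to believe that shifted minimal families with $\tau=2$ form a ``small number of explicit shapes'': minimality is not a shift-stable property, and you have not produced the claimed enumeration. The paper avoids shifting as the main engine entirely. It instead uses minimality of $\mathcal H$ in the set-pairs sense: for each $H_\ell$ there is an element $i_\ell\notin H_\ell$ lying in every other $H_j$; the Bollob\'as set-pairs inequality applied to $(H_\ell,\{i_\ell\})_\ell$ gives $|\mathcal H|\le s+1$, and more importantly the elements $i_2,\dots,i_z$ together with the leftover elements $j_1,\dots,j_{s+1-z}$ of $H_1$ provide an ordered partition of $\ff$ by the \emph{first} such element a set $F$ contains. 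Bounding each class by the obvious binomial, one gets $|\ff|\le f(z)$ for an explicit decreasing-in-$z$ function $f$ with $f(z-1)-f(z)>1$, hence $|\ff|+|\mathcal H|\le f(z')+z'$. The exact sizes $f(2)+2$ and $f(3)+3$ are then verified to match $|\ff_2'(s)|+2$ and $|\ff_2(s)|+3$ respectively, and the uniqueness for $z=3$, $s\ge k$ is a short case analysis (one case does use a single targeted shift, the other a refined version of the same count). This is a direct counting argument, not a reduction-to-shifted-then-domination argument.

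Second, two concrete claims in your sketch are false. You assert that Corollary~\ref{corkz}, applied with $a=k-1,b=s$, ``furnishes the clean closed form'' for $|\ff|+|\mathcal H|$ when $\mathcal H\cong\mathcal T_2'(s)$; but Corollary~\ref{corkz} is an inequality, not an equality, and for $|\mathcal H|=2$ it is nowhere near tight, so it does not pin down the candidate optimum. (The paper never invokes Corollary~\ref{corkz} inside this lemma; that corollary is used later, for the small-diversity case of Theorem~\ref{thmtau3}.) You also claim that $\mathcal T_2'(s)$ ``has the smallest possible union'' of size $2s$, which is wrong: $\mathcal T_2(s)$ already has union of size $2s-1$, and other minimal families can have even smaller unions. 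Since your proposed ``combinatorial domination'' comparison depends on this monotonicity in $|U|$, the heart of your argument breaks at this point. The domination statement itself is also only asserted, not proved, and the interaction between the union size and the transversal-generating polynomial is precisely the delicate part that the paper's $f(z)$ count is designed to sidestep.

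In short: your intuition that one should count $\ff$ by a stratification and compare term by term is in the right spirit, but the specific stratification (by trace on $U$), the shifting reduction, and the use of Corollary~\ref{corkz} do not work as described. The paper's stratification by the first element of a canonical ordering of $H_1\cup\{i_2,\dots,i_z\}$, together with Bollob\'as to control $z$, is what makes the comparison clean and avoids any shifting classification.
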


%We can apply the first part of Lemma~\ref{lemmin} in our situation with $[2,n]\setminus \bigcap_{M\in\mm}M$ playing the role of $[m]$ and $k-t$ playing the role of $s$. Note that $|[2,n]\setminus \bigcap_{M\in\mm}M| = n-t-1\ge 2k-t$, and so the condition on $m$ from the lemma is satisfied. This proves \eqref{eqclass2}, moreover, we get that for $k\ge 5$ the inequality was strict unless $|\ff(\bar 1)|=2$ in the first place. But then $\ff$ is isomorphic to a subfamily of $\mathcal J_{i}$ for $i\ge i'$, and the equality is possible only if $\ff$ is isomorphic to $\mathcal J_i$.
%Thus, to conclude the proof of Theorem~\ref{thmfull2}, we only have to prove the lemma.

%We note that the second, more complicated, part of Lemma`\ref{lemmin} is not needed for this application, however, it will be crucial for the completion of the proof of Theorem~\ref{thmtau3}.

\begin{proof}[Proof of Lemma~\ref{lemmin}]
Let us first express $|\ff'_2(s)|$. It is not difficult to see that
\begin{footnotesize}\begin{align}
\notag  |\ff'_2(s)|\  =\ & {m-1\choose k-2}-{m-s-1\choose k-2} + \\
\notag   & {m-2\choose k-2}-{m-s-2\choose k-2}+\\
\notag   &\cdots \\
\label{eqfs}   &{m-s\choose k-2}-{m-2s\choose k-2}.
\end{align}\end{footnotesize}
Indeed, in the first line we count the sets containing $1$ that intersect $[s+1,2s]$, in the second line we count the sets not containing $1$, containing $2$ and intersecting $[s+1,2s]$ etc.

Quite surprisingly, we can bound the size of $\ff$ for any $\mathcal H$ in a similar way. Suppose that $z:=|\mathcal H|$ and $\mathcal H = \{H_1,\ldots, H_z\}$. Since $\mathcal H$ is minimal, for each $l\in[z]$ there exists an element $i_l$ such that $i_l\notin H_l$ and $i_l\in \bigcap_{j\in[z]\setminus \{l\}} H_l$. (All $i_l$ are of course different.) Applying Bollobas' set-pairs inequality \cite{Bol} to $\mathcal H$ and $\{i_{l}\ :\ l\in[z]\}$, we get that $|\mathcal H|\le {s+1\choose s}=s+1$.

For each $l=2,\ldots, z$, we count the sets $F\in\ff$ such that $F\cap \{i_2,\ldots, i_l\}=\{i_l\}$. Such sets must additionally intersect $H_{l}\setminus \{i_2,\ldots, i_{l-1}\}$. Note that $H_1\supset \{i_2,\ldots, i_z\}$. This covers all sets from $\ff$ that intersect $\{i_2,\ldots, i_z\}$ and gives the first $z-1$ lines in the displayed inequality below. Next, we have to deal with sets from $\ff$ that do not intersect $\{i_2,\ldots, i_z\}$. Firstly, they must intersect $H_1$.
Assuming that $H_1\setminus \{i_2,\ldots,i_z\} = \{j_1,\ldots, j_{s+1-z}\}$, for each  $l=1,\ldots, s+1-z$ we further count the sets $F\in \ff$ such that $F\cap \{i_2,\ldots, i_z, j_1,\ldots, j_l\}=\{j_l\}$. Such sets must additionally intersect $H_{i}\setminus \{i_2,\ldots, i_{z}\}$ for some $i\in [2,z]$. (The element $j_l$ cannot be contained in all sets from $\mathcal H$ since the intersection of $\mathcal H$ is empty.)  Note that $H_{i}\setminus \{i_2,\ldots, i_{z}\}$ is a set of size $s-z+2$. This explains the last $s+1-z$ lines in the displayed inequality below.
 Since $F\cap H_1\ne \emptyset$ for any $F\in \ff$ and given that the classes for different $l$ are disjoint, we clearly counted each set from $\ff$ exactly once. (However, we may also count some sets that are not in $\ff$.) Doing this count, we get the following bound on $\ff$.
\begin{footnotesize}\begin{align}
 \notag |\ff|\ \le \ & {m-1\choose k-2}-{m-s-1\choose k-2} + \\
\notag   & {m-2\choose k-2}-{m-s-1\choose k-2}+\\
\notag   &\cdots\\
\notag   & {m-z+1\choose k-2}-{m-s-1\choose k-2}+\\
\notag   & {m-z\choose k-2}-{m-s-2\choose k-2}+\\
\notag   &\cdots \\
\label{eqfz3}   &{m-s\choose k-2}-{m-2s-2+z\choose k-2}\ =:\ f(z).
\end{align}\end{footnotesize}
Remark that \eqref{eqfz3} coincides with \eqref{eqfs} when substituting $z=2$.
We have $f(z-1)-f(z)\ge {m-s-1\choose k-2}-{m-s-2\choose k-2}={m-s-2\choose k-3}> 1$ (here we use that $m\ge s+k$ and $k\ge 4$). Therefore, for any $z\ge z'$, \begin{equation}\label{eqz'}|\mathcal H|+|\ff|\le f(z')+z',\end{equation} and the inequality is strict unless $z=|\mathcal H|=z'$.

At the same time, we have $|\ff'_2(s)|+|\mathcal T'_2(s)|=f(2)+2$ and $|\ff_2(s)|+|\mathcal T_2(s)|=f(3)+3$! (The former we have seen above, and the latter is easy to verify by doing exactly the same count.) Since, up to isomorphism, there is only one family $\mathcal H\subset {[m]\choose s}$ of size $2$ with $\tau(\mathcal H)=2$, we immediately conclude that the first part of the statement holds. To deduce the second part, we only need to show that, among all possible choices of $\mathcal H$ of size $3$, the only one (up to isomorphism) that attains equality in \eqref{eqz'} is $\mathcal H = \mathcal T_2(s)$.

Recall that, for uniqueness in the second part of the lemma, we have the additional condition $s\ge k$.
If there are two sets $H',H''\in \mathcal H$ such that $|H'\cap H''|=s-1$, then $\mathcal H$ is isomorphic to $\mathcal T_2(s)$. Therefore, in what follows we assume that $|H'\cap H''|\le s-2$ for any $H',H''\in\mathcal H$.

Let us deal with the case when $|H_i\cap H_j|=1$ for $1\le i<j\le 3$ and $H_1\cap H_2\cap H_3 = \emptyset$. Note that this implies that
%\begin{equation}\label{eqbign}
$$  m\ge 3s-3.$$
 Since $s\ge k \ge 4$, there are elements $j_l\in H_l\setminus (H_{l'}\cup H_{l''})$, $\{l,l',l''\}=[3]$. Perform the $(j_1,j_2)$-shift on $\ff\cup \mathcal H$ and denote $\ff':=S_{j_1j_2}(\ff)$. Clearly, the sizes of the families stay the same and the resulting families are cross-intersecting. The family $S_{j_1j_2}(\mathcal H)$  has covering number $2$, moreover there are two sets in $\mathcal H$ that intersect in $2$ elements.
%Moreover, we can add new sets to $\ff'$ without violating the cross-intersecting property. Indeed, consider the families
%\begin{align*}\mathcal A\ :=&\ \Big\{F\in {[m]\choose k-1}\ :\  j_3\in F,\ F\cap (H_1\cup H_2)=\{j_2\}\Big\},\\
%\mathcal A'\ :=&\ \Big\{F\in {[m]\choose k-1}\ :\  j_3\in F,\ F\cap (H_1\cup H_2)=\{j_1\}\Big\}.\end{align*}
%It is easy to see that actually $\aaa' = S_{j_1j_2}(\aaa)$. Moreover, $\aaa\cap \ff = \emptyset$ since sets from $\aaa$ do not intersect $H_1$ and $\aaa'\cap \ff' = \emptyset$ since $\aaa'\cap \ff=\emptyset$ (again, since sets from $\aaa'$ do not intersect $H_2$) and $\aaa\cap \ff = \emptyset$. At the same time, $\aaa'$ may be included into $\ff'$ since sets from $\aaa'$ intersect all sets in $S_{j_1j_2}(\mathcal H)$. Finally, $\aaa = {[m]\setminus X\choose k-3}$, where $|X| = 2s$, and thus $|\aaa|\ge {k-3\choose k-3}$ due to $m\ge 3s-3\ge k+2s-3$, so $\aaa$ is non-empty, and thus $\mathcal H$ was not optimal.

Finally, we may assume that $|H_1\cap H_2|\in [2,s-2]$. Then we do a similar count as in \eqref{eqfz3}. Recall that $z=3$. The first two steps (with $i_2,i_3$) are the same. The part with $j_i$ is, however, slightly modified. Take $j'\in (H_1\cap H_2)\setminus \{i_3\}$ and $j'' \in H_1\setminus (H_2\cup \{i_2\})$. Such choices are possible due to $|H_1\cap H_2|\in [2,s-2]$. Count the sets $F\in \ff$ such that $F\cap \{i_2,i_3,j'\}=j'$. They must intersect $H_3\setminus \{i_2\}$. Next, crucially, count the sets in $F\in \ff$ such that $F\cap \{i_2,i_3,j',j''\}=j''$. They must intersect $H_2\setminus \{i_3,j'\}$ (note the size of this set is $s-2$ instead of $s-1$). The remaining count is the same: let $\{j_1,\ldots, j_{s-4}\}:=H_1\setminus \{i_2,i_3,j',j''\}$ and, for each $l\in[s-4]$, count the sets $F\in \ff$ such that $F\cap \{i_2,i_3,j',j'',j_1,\ldots, j_{l}\} =j_l$. They must additionally intersect either $H_2\setminus \{i_3,j'\}$, or $H_3\setminus \{i_2\}$. Thus, we obtain the following bound.
\begin{footnotesize}\begin{align}
 \notag |\ff|\ \le \ & {m-1\choose k-2}-{m-s-1\choose k-2} + \\
\notag   & {m-2\choose k-2}-{m-s-1\choose k-2}+\\
\notag   & {m-3\choose k-2}-{m-s-2\choose k-2}+\\
\notag   & {m-4\choose k-2}-{m-s-\textbf{2}\choose k-2}+\\
\notag   & {m-5\choose k-2}-{m-s-4\choose k-2}+\\
\notag   &\cdots\\
\label{eqfz}   &{m-s\choose k-2}-{m-2s+1\choose k-2}\ =:\ f'(3).
\end{align}\end{footnotesize}
We have $f(3)-f'(3) = {m-s-2\choose k-2}-{m-s-3\choose k-2} = {m-s-3\choose k-3}\ge 1$ due to $m\ge s+k$, and thus $|\ff|\le f'(3)<f(3) = |\ff_2(s)|$. Thus, in the assumption $s\ge k$ and if $\mathcal H$, $|\mathcal H|\ge 3$, is not isomorphic to $\mathcal T_2(s)$, we  have strict inequality in \eqref{eqz'} for $z'=3$. The lemma is proven.
\end{proof}

\subsection{Proof of Theorem~\ref{thmtau3}}\label{sec51}
Recall the expression of the size of $\mathcal C_3(n,k)$, obtained in the proof of Lemma~\ref{lemmin} (cf. \eqref{eqfz}):
\begin{footnotesize}
\begin{align}%\label{sizec}
   \notag |\mathcal C_3(n,k)|\ =\ 3\ +\ &{n-2\choose k-2}-{n-k-2\choose k-2} + \\
 \notag  & {n-3\choose k-2}-{n-k-2\choose k-2}+\\
\notag   & {n-4\choose k-2}-{n-k-3\choose k-2}+\\
 \notag  &\cdots \\
  \label{eqsizec} &{n-k-1\choose k-2}-{n-2k\choose k-2}.
\end{align}
\end{footnotesize}
We can verify this formula directly. W.l.o.g. assume that the three sets not containing $1$ are $A_1=[2,k+1], A_2=\{2\}\cup[k+2,k], A_3=\{3\}\cup [k+2,2k]$. Then the first line counts the three sets $A_i$ and the sets containing 1, containing 2 and intersecting $A_3$ (i.e., all sets containing $1,2$ minus the sets that contain $1,2$ and avoid $A_3$). The second line counts the sets containing $1$ and $3$, avoiding $2$ and intersecting $A_2$, which effectively means intersecting $[k+2,2k]$. For each $i=2,\ldots, k$, in the line number $i$ we count the sets $B$ such that $B\cap [i+1] = \{1,i+1\}$ and that intersect $[k+2,2k]$.

The proof is very different in the case when the diversity of $\ff$ is large and when it is small. We first consider the case of large diversity.
\subsection{The case $\gamma(\ff)>{n-5\choose k-3}$}
First, we consider the case $n\le 2(k-1)^2$.
\begin{lem} If $k\ge 100$, $n\le 2(k-1)^2$ and $\ff\subset {[n]\choose k}$ is intersecting with $\gamma(\ff)>{n-5\choose k-3}$, then $|\ff|<|\mathcal C_3(n,k)|$.
\end{lem}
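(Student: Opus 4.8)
The plan is to apply Theorem~\ref{thm1} with $u=4$ and to compare the resulting upper bound for $|\ff|$ with $|\mathcal C_3(n,k)|$. Since $n>2k$ one has $\binom{n-5}{k-3}=\frac{n-k-1}{k-3}\binom{n-5}{k-4}>\binom{n-5}{k-4}=\binom{n-5}{n-k-1}$, so the hypothesis $\gamma(\ff)>\binom{n-5}{k-3}$ gives $\gamma(\ff)\ge\binom{n-5}{n-k-1}$; as $3\le 4\le k$ and $n>2k$, Theorem~\ref{thm1} applies and yields
\[
|\ff|\ \le\ \binom{n-1}{k-1}+\binom{n-5}{k-4}-\binom{n-5}{k-1}.
\]
On the other hand, reading off \eqref{eqsizec} and collapsing the two telescoping sums with the hockey-stick identity gives the closed form
\[
|\mathcal C_3(n,k)|\ =\ \binom{n-1}{k-1}-2\binom{n-k-1}{k-1}-\binom{n-k-2}{k-2}+\binom{n-2k}{k-1}+3 .
\]
Subtracting these, it suffices to prove the purely numerical inequality
\[
\binom{n-5}{k-1}-\binom{n-5}{k-4}+\binom{n-2k}{k-1}+3\ >\ 2\binom{n-k-1}{k-1}+\binom{n-k-2}{k-2}
\]
for all $k\ge100$ and $2k<n\le2(k-1)^2$.

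To establish this I would divide through by $\binom{n-k-1}{k-1}$ and work with the ratios $r:=\binom{n-5}{k-1}/\binom{n-k-1}{k-1}=\prod_{i=0}^{k-2}\bigl(1+\tfrac{k-4}{n-k-1-i}\bigr)$ and $s:=\binom{n-2k}{k-1}/\binom{n-k-1}{k-1}=\prod_{i=0}^{k-2}\bigl(1+\tfrac{k-1}{n-2k-i}\bigr)^{-1}$, together with $\binom{n-k-2}{k-2}/\binom{n-k-1}{k-1}=\tfrac{k-1}{n-k-1}$ and $\binom{n-5}{k-4}/\binom{n-5}{k-1}=\tfrac{(k-1)(k-2)(k-3)}{(n-k-1)(n-k-2)(n-k-3)}$. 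In the range $2k<n\le k^2$ one has $r\ge\exp\!\bigl(\tfrac{(k-1)(k-4)}{n-5}\bigr)\ge\exp\!\bigl(\tfrac{(k-1)(k-4)}{k^2-5}\bigr)>e^{0.95}>2.5$, and $r$ grows exponentially as $n$ decreases; comparing the exponent of $r$ with $\tfrac{k-1}{n-k-1}$ shows that $r>2+\tfrac{k-1}{n-k-1}$ for every such $n$ (for $n$ near $2k$ this is trivial, $r$ being exponential in $k$; for $n$ near $k^2$ one checks $e^{0.95}>2+\tfrac{k-1}{k^2-k-1}$ directly). The subtracted term never destroys this: either $\binom{n-5}{k-4}$ is a tiny fraction of $\binom{n-5}{k-1}$ (when $n$ is of order $k^2$) or $\binom{n-5}{k-1}-\binom{n-5}{k-4}$ is still exponentially larger than $\binom{n-k-1}{k-1}$ (when $n$ is of order $k$), so the inequality holds comfortably, with $s\ge0$ and the additive $3$ only helping.

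The delicate case is $k^2<n\le2(k-1)^2$, where $r$ is close to $\sqrt e$. Writing $r=e^{\alpha}$, $s=e^{-\beta}$ with $\alpha,\beta>0$, a term-by-term comparison of the two defining products (using that $\ln(1+x)/x$ is decreasing) gives $\beta\le\alpha\cdot\tfrac{k-1}{k-4}\bigl(1+\tfrac{k-1}{n-3k+2}\bigr)=\alpha\,(1+O(1/k))$ throughout this range, so that
\[
r+s\ =\ e^{\alpha}+e^{-\beta}\ \ge\ e^{\alpha}+e^{-\alpha}\bigl(1-O(1/k)\bigr)\ \ge\ 2\cosh\alpha-O(1/k).
\]
The restriction $n\le2(k-1)^2$ is precisely what keeps $\alpha$ away from $0$: $\alpha\ge\tfrac{(k-1)(k-4)}{n-5}\ge\tfrac{k-4}{2(k-1)}>\tfrac12-\tfrac2k$, whence $2\cosh\alpha\ge e^{1/2}+e^{-1/2}-o(1)>2.2$. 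Meanwhile the right-hand side, divided by $\binom{n-k-1}{k-1}$, equals $2+\tfrac{k-1}{n-k-1}+o(1)\le2+o(1)$ in this regime, and the additive $3$ together with the subtracted $\binom{n-5}{k-4}$ are negligible against $\binom{n-k-1}{k-1}$. Hence $r+s$ exceeds the right-hand side by a positive constant for $k\ge100$, and the inequality follows; the two ranges $2k<n\le k^2$ and $k^2<n\le2(k-1)^2$ together cover all admissible $n$.

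The point I expect to be the main obstacle is exactly this last estimate: one must pin down the $O(1/k)$ errors in $\beta/\alpha$ and in the bound $\alpha>\tfrac12-\tfrac2k$ explicitly, and verify that the lower-order terms (the subtracted $\binom{n-5}{k-4}$ and the additive $3$) are genuinely negligible, so as to guarantee a strictly positive margin in the displayed inequality all the way up to $n=2(k-1)^2$ at the concrete value $k=100$. Everything else — the reduction via Theorem~\ref{thm1} and the hockey-stick computation of $|\mathcal C_3(n,k)|$ — is routine.
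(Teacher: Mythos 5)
Your reduction to Theorem~\ref{thm1} with $u=4$ is exactly the paper's first move, and your closed form $|\mathcal C_3(n,k)|=3+\binom{n-1}{k-1}-2\binom{n-k-1}{k-1}-\binom{n-k-2}{k-2}+\binom{n-2k}{k-1}$ is correct (the two hockey-stick telescopes check out against \eqref{eqsizec}). From there, however, you diverge from the paper. The paper never computes $|\mathcal C_3(n,k)|$ exactly; instead it bounds each line $a-b$ of \eqref{eqsizec} below by $\tfrac13 a$ to get $|\mathcal C_3|>\tfrac19\binom{n-1}{k-1}$, or alternatively bounds the total of the subtracted terms by $\binom{n-k}{k-1}$, and compares against $|\ff|$ expressed as a fraction of $\binom{n-1}{k-1}$, splitting into three sub-ranges $n>50(k-1)$, $7k<n\le 50(k-1)$, $2k<n\le 7k$. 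You instead subtract the two exact expressions, divide by $\binom{n-k-1}{k-1}$, and run a two-range analysis ($2k<n\le k^2$ and $k^2<n\le 2(k-1)^2$) built on the product representations of $r$ and $s$ and the convexity inequality $r+s\gtrsim 2\cosh\alpha$. This is a genuinely different, and arguably more transparent, route: it isolates the real content of the lemma (that the normalized surplus $r+s$ stays above $2+o(1)$) rather than hiding it inside case-by-case constant-chasing. The paper's version buys robustness — the bounds $\tfrac19$ vs.\ $\tfrac1{10}$ etc.\ are loose enough that one need not track error terms — while yours buys conceptual clarity at the price of needing sharper estimates.

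The one thing that keeps this a sketch rather than a proof is exactly what you flag yourself: the $O(1/k)$ and $o(1)$ terms (in $\beta\le\alpha(1+O(1/k))$, in $2\cosh\alpha-O(1/k)$, and in the $2+o(1)$ bound on the right-hand side) must be made numerical and verified at $k=100$, $n=2(k-1)^2$, where the margin is smallest. I checked that the constants do work out: $\alpha\ge\frac{(k-1)(k-4)}{2(k-1)^2-5}>0.485$ gives $2\cosh\alpha>2.24$; the multiplicative loss $\beta/\alpha\le\frac{k-1}{k-4}\bigl(1+\frac{k-1}{n-3k+2}\bigr)$ costs at most a factor $(1+4/k)$, hence at most $\approx 0.02$ off $e^{-\alpha}$; the right side is $2+\frac{k-1}{n-k-1}<2.01$; and $\binom{n-5}{k-4}/\binom{n-k-1}{k-1}\le\rho r$ with $\rho<10^{-6}$ in this regime. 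In the low range $2k<n\le k^2$ your phrase ``the inequality holds comfortably'' glosses over the transition region where $\rho$ is not yet negligible and $\frac{k-1}{n-k-1}$ is not yet small; but there $r$ already exceeds $e^{(k-1)(k-4)/(n-5)}\ge e^{k(k-4)/((c-1)n)}$-type exponentials, and $(1-\rho)r$ dwarfs $2+\frac{k-1}{n-k-1}\le 3$ for every $c=n/k\in(2,k]$, so the argument does close. In short: correct strategy, correct estimates, different decomposition from the paper, with explicit constants still to be filled in.
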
\begin{proof}
The proof of the lemma is rather technical and requires different estimates on sums of binomial coefficients. We start by lower bounding the size of $\mathcal C_3(n,k)$. For any $i\ge 0$
$$\frac{{n-k-2-i\choose k-2}}{{n-3-i\choose k-2}}=\prod_{j=1}^{k-2}\frac{n-k-1-i-j}{n-2-i-j}\le e^{-(k-1)(k-2)/(n-2)}\le e^{-\frac{k-2}{2(k-1)}}<\frac 23,$$
provided $k\ge 100$. Thus, we can lower bound each line in \eqref{eqsizec} of the form $a-b$ by $\frac 13 a$ ($a,b$ are some binomial coefficients here) and get
\begin{align*} |\mathcal C_3(n,k)|\ >&\ \frac13\Big({n-2\choose k-2}+{n-3\choose k-2}+\ldots+{n-k-1\choose k-2}\Big)\\
=&\ \frac13\Big({n-1\choose k-1}-{n-k-1\choose k-1}\Big)\\
\ge&\ \frac 19 {n-1\choose k-1},
\end{align*}
where the last inequality is obtained analogously. On the other hand, since $\ff$ is intersecting and $\gamma(\ff)\ge {n-5\choose k-3}\ge {n-5\choose k-4},$ we can use Theorem~\ref{thm1} with $u=4$ and get
\begin{align}
\label{eqboundf}  |\ff|\ \le&\ {n-1\choose k-1}-{n-5\choose k-1}+{n-5\choose k-4} \\
  \notag\le &\ 5{n-2\choose k-2}\ \le \ \frac 1{10}{n-1\choose k-1}
\end{align}
for $n>50(k-1)$. Thus, for $n>50(k-1)$ we have $|\ff|<|\mathcal C_3(n,k)|$.

Consider the case $n=C(k-1)\le 50(k-1)$. In this case, we need another lower bound on the size of $\mathcal C_3(n,k)$. The sum of all subtracted binomial coefficients in \eqref{eqsizec} is at most ${n-k\choose k-1}$, and thus
$$|\mathcal C_3(n,k)|>{n-1\choose k-1}-{n-k-1\choose k-1}-{n-k\choose k-1}\ge {n-1\choose k-1}-2{n-k\choose k-1}.$$
We have
$$\frac{{n-k\choose k-1}}{{n-1\choose k-1}}\le \Big(\frac{n-k}{n-1}\Big)^{k-1}\le e^{-\frac{(k-1)^2}{n-1}}\le  e^{-(k-1)/C}.$$
Thus, \begin{equation}\label{eqboundc2}|\mathcal C_3(n,k)|\ge \big(1-2e^{-(k-1)/C}\big){n-1\choose k-1}.\end{equation}
Let us upper bound $|\ff|$. We first deal with the case  $2k<n\le 7k$. If $n>2k$ then we have
$$\frac{{n-5\choose k-4}}{{n-5\choose k-1}}=\frac{(k-1)(k-2)(k-3)}{(n-k-1)(n-k-2)(n-k-3)}\le \frac{k-2}{n-k-1}\le 1-\frac 2k$$
and if additionally $k\ge 100$, then we have
$$\frac{{n-1\choose k-1}}{{n-5\choose k-1}}= \prod_{i=1}^4\frac{n-i}{n-k-i+1}\le 2^4.$$
Using the first inequality of \eqref{eqboundf} and these calculations, we  have $$|\ff|\le {n-1\choose k-1}-\frac 2k{n-5\choose k-1}\le \Big(1-\frac 1{8k}\Big){n-1\choose k-1}.$$
Assume that $2k< n\le 7k$. Comparing the bound \eqref{eqboundc2} and the upper bound on $|\ff|$, we see that $\frac 1{8k}>2e^{-(k-1)/7}$ for any $k\ge 100$, and thus $|\ff|<|\mathcal C_3(n,k)|$ in this case.

We are left to deal with the case $7k<n\le 50(k-1).$ For $n>7k$ we have ${n-5\choose k-1}-{n-6\choose k-1}={n-6\choose k-2}>{n-5\choose k-4}$ and
$$\frac{{n-1\choose k-1}}{{n-6\choose k-1}}= \prod_{i=1}^5\frac{n-i}{n-k-i+1}\le e^{5(k-1)/(n-k-4)}<e^{6k/n}.$$
Substituting this into the inequality \eqref{eqboundf}, we get $$|\ff|\le {n-1\choose k-1}-{n-6\choose k-1}\le \Big(1-e^{-6k/n}\Big){n-1\choose k-1}.$$
Comparing with \eqref{eqboundf}, we see that $e^{6k/n}<e^{7/C}<e^{(k-1)/C}/2$ for any $k\ge 100$ and $C\le 50$, and thus  $|\ff|<|\mathcal C_3(n,k)|$ again. The proof of the lemma is complete.
\end{proof}
In the remainder of this subsection, we assume that $n>2(k-1)^2$.
The expression \eqref{eqsizec} can be rewritten as follows
\begin{footnotesize}
\begin{align}\label{sizec2}
 \notag |\mathcal C_3(n,k)|\ =\ 3\ +\ &{n-3\choose k-3}+\ldots+{n-k-2\choose k-3} + \\
 \notag  & {n-4\choose k-3}+\ldots+{n-k-2\choose k-3}+\\
\notag   & {n-5\choose k-3}+\ldots+{n-k-3\choose k-3}+\\
 \notag  &\cdots \\
   &{n-k-2\choose k-3}+\ldots+{n-2k\choose k-3}\\
   \ge\ 3\ +\ &(k+(k-1)^2){n-k-2\choose k-3},
\end{align}
\end{footnotesize}
where in the inequality we used the convexity of ${x\choose \ell}$ as a function of $x$ %: that ${a\choose \ell}+{b\choose \ell}\ge 2{\frac{a+b}2\choose \ell}$
and that we sum up ${x_i\choose k-3}$ with the average of $x_i$ being at least $n-k-2$. We also have
$$\frac{{n-3\choose k-3}}{{n-k-2\choose k-3}}=\prod_{i=1}^{k-3}\frac{n-2-i}{n-k-1-i}\le e^{(k-1)(k-3)/(n-2k+2)}\le e^{0.5}$$
for $n\ge 2(k-1)^2$. Thus, for such $n$, we have
\begin{equation}\label{eqc3large}|\mathcal C_3(n,k)|\ge e^{-0.5}(k^2-k+1){n-3\choose k-3}.\end{equation}

Our next goal is to upper bound $|\ff|$, using $\tau(\ff)\ge 3$ and $\gamma(\ff)> {n-5\choose k-3}$. To this end, we employ the peeling procedure, developed in \cite{KuZa} and \cite{Kup77}. We need some preparations.

We say that an intersecting family $\G$ is {\it maximal} if whenever $A\subsetneq B\in \G$ then $\G\setminus \{B\}\cup \{A\}$ is not intersecting. Moreover, we require a maximal $\G$ to be an antichain: we have $B_1\not\subset B_2$ for any $B_1,B_2\in \G$.
\begin{obs} Given an intersecting family $\ff$, there is a maximal intersecting family $\G$ such that for any $F\in \ff$ there is $G\in \G$ with $G\subset F$.
\end{obs}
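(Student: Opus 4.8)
The plan is to realize $\G$ as a minimizer of a simple potential. Write $\subseteq$ for the (non-strict) containment meant in the statement, and let $\mathcal{S}$ be the collection of all intersecting families $\G\subseteq 2^{[n]}$ with the \emph{refinement property}: every $F\in\ff$ contains some $G\in\G$. Since $\ff$ is itself intersecting and $F\subseteq F$ for each $F\in\ff$, we have $\ff\in\mathcal{S}$, so $\mathcal{S}\ne\emptyset$; the ground set being finite, $\mathcal{S}$ is finite, and I would choose a $\G\in\mathcal{S}$ minimizing $\sum_{G\in\G}|G|$. The assertion is then that any such $\G$ is maximal in the sense of the definition preceding the Observation; the refinement property holds for it by construction.

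I would check the two required properties by contradiction, each time exhibiting a strictly smaller member of $\mathcal{S}$. For the antichain property: if $A\subsetneq B$ with $A,B\in\G$, then $\G\setminus\{B\}$ is still intersecting and still lies in $\mathcal{S}$, because for any $F\in\ff$ a witness $G\subseteq F$ with $G\in\G$ either survives the deletion or equals $B$, and in the latter case $A\subsetneq B\subseteq F$ makes $A$ a surviving witness; since $|B|\ge 1$, this lowers $\sum_{G\in\G}|G|$, a contradiction. For the maximality condition: if $B\in\G$ and $A\subsetneq B$ are such that $\G':=(\G\setminus\{B\})\cup\{A\}$ is intersecting, the same case analysis gives $\G'\in\mathcal{S}$ (a witness either survives, or equals $B$ and is replaced by $A$, which lies in $\G'$ and is contained in $F$), while $|A|<|B|$ forces $\sum_{G\in\G'}|G|<\sum_{G\in\G}|G|$ — even more so in the degenerate case where $A$ was already a member of $\G$ — again contradicting minimality. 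Hence $\G$ is maximal. When $\ff\ne\emptyset$ such a $\G$ is automatically nonempty, since any member of $\mathcal{S}$ contains a subset of some $F\in\ff$; when $\ff=\emptyset$ the statement is vacuous.

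I do not expect any genuine obstacle: the Observation is a straightforward extremality (equivalently, terminating-algorithm) argument, and the only bookkeeping point is that the refinement property is preserved under the two reductions used above — removing a set that strictly contains another member, and replacing a set by a proper subset while keeping the family intersecting. The uniform reason is the triviality that $A\subsetneq B\subseteq F$ implies $A\subseteq F$, so any witness destroyed by a reduction is instantly replaced by the new, smaller set. One could equally present the proof iteratively: starting from $\G=\ff$, repeatedly apply either reduction as long as one is available; since $\sum_{G\in\G}|G|$ is a nonnegative integer that strictly drops at each step, the process halts, and a family to which neither reduction applies is precisely a maximal intersecting family, still enjoying the refinement property.
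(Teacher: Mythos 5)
Your proof is correct and is essentially the paper's argument made precise: the paper simply says to ``gradually replace sets from $\G$ by their proper subsets as long as the intersecting property is preserved,'' which is exactly the terminating iteration you describe in your closing paragraph, with $\sum_{G\in\G}|G|$ serving as the implicit decreasing potential that guarantees termination. Phrasing it as a global minimizer of that potential is a clean equivalent packaging, and your case check that the refinement property survives both reductions (deleting a superset, or swapping a set for a proper subset) is the one small bookkeeping point the paper leaves implicit.
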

The proof is straightforward: gradually replace sets from $\G$ by their proper subsets as long as the intersecting property is preserved. The family $\G$ is similar to the concept of a generating set of Ahlswede and Khachatrian, as well as to earlier concepts of bases studied by Frankl and F\"uredi. We refer to our recent survey \cite[Section 7]{Kup99} for a discussion of different concepts of bases.

For a real number $r\ge 1$ we say that a family $\ff$ is {\it $r$-spread} if $|\ff(X)|\le r^{-|X|}|\ff|$ for any set $X$. The following lemma is standard (see, e.g., \cite{KuZa}, \cite{Kup77}).
\begin{lem}\label{lemspread}
  Given $r\ge 1$ and $\G\subset{[n]\choose k}$, if $|\G|> r^k$ then there is a set $X$ of size strictly smaller than $k$ such that $\G(X)$ is $r$-spread.
\end{lem}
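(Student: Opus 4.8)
The plan is the standard ``iterate on the densest restriction'' argument. I would set $\G_0:=\G$ and $Y_0:=\emptyset$, and build a sequence as follows: given $\G_i=\G(Y_i)$, if $\G_i$ is $r$-spread, stop and output $X:=Y_i$; otherwise, by the definition of $r$-spread there is a set $Z$ with $|\G_i(Z)|>r^{-|Z|}|\G_i|$. Such a $Z$ is necessarily nonempty (for $Z=\emptyset$ the inequality fails) and disjoint from $Y_i$ (if some $y\in Z$ lay in $Y_i$, then no member of $\G_i$ would contain $y$, so $\G_i(Z)=\emptyset$). Put $Y_{i+1}:=Y_i\cup Z$, so that $\G_{i+1}:=\G_i(Z)=\G(Y_{i+1})$ and $|Y_{i+1}|=|Y_i|+|Z|>|Y_i|$.

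Since $|Y_i|$ strictly increases and is bounded by $k$, the procedure terminates, say with output $X=Y_T$; if $T=0$ then $|X|=0<k$ and we are done, so assume $T\ge 1$. Telescoping the defining inequalities gives
\[|\G(X)|=|\G_T|\;>\;r^{-|Y_T|}\,|\G|\;\ge\;r^{-|Y_T|}\,r^k\;=\;r^{\,k-|X|},\]
where the last estimate uses the hypothesis $|\G|\ge r^k$. It remains to check $|X|<k$. Each $\G_i$ is a family of $(k-|Y_i|)$-element subsets of $[n]\setminus Y_i$, so $|Y_i|\le k$ throughout; and if we had $|X|=|Y_T|=k$, then $\G_T\subset{[n]\setminus Y_T\choose 0}=\{\emptyset\}$ would force $|\G_T|\le 1$, contradicting $|\G_T|>r^{\,k-|X|}=1$. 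Hence $|X|\le k-1<k$, and $\G(X)$ is $r$-spread, as required.

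I do not expect a genuine obstacle here: this is a routine self-improvement lemma whose entire content is the displayed telescoped inequality together with the observation that a family of $0$-element sets has at most one member. The only two small points that need attention are (i) that the restricting set $Z$ found at each step is nonempty, so that the process genuinely advances and therefore terminates, and (ii) that $Z$ may be taken disjoint from $Y_i$, so that $\G_i(Z)=\G(Y_i\cup Z)$ and the exponents $|Y_i|$ accumulate additively.
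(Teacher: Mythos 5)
Your proof is correct and is precisely the iterative unpacking of the paper's one-line hint ("take an inclusion-maximal $X$ that violates $r$-spreadness"): both arguments telescope the violation inequalities to get $|\G(X)|>r^{k-|X|}$ and then rule out $|X|=k$ via $|\G(X)|\le 1$. The only cosmetic difference is that the paper picks a maximal violating set directly while you construct one greedily, which also handles the degenerate $T=0$ case explicitly.
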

To prove the lemma, take an inclusion-maximal $X$ that violates the $r$-spreadness of $\G$.
\begin{lem}\label{lemspread2}
   If $\G\subset{[n]\choose \le m}$ is intersecting and there is a set $X$ of size strictly smaller than $m$ and a subfamily $\G'\subset \G$ such that $\G'(X)$ is $\alpha$-spread with $\alpha>m$, then $\G\setminus \G[X] \cup \{X\}$ is intersecting.
\end{lem}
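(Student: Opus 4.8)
The statement to be proved says that replacing every superset of $X$ in $\G$ by the single set $X$ (that is, passing to $\{G\in\G:X\not\subset G\}\cup\{X\}$, which is what the shorthand $\G\setminus\G(X)\cup\{X\}$ means in the peeling context) keeps the family intersecting. Since the members of $\G$ that do not contain $X$ already pairwise intersect, being members of the intersecting family $\G$, the only thing left to verify is that $X$ meets each of them, i.e.\ that there is \emph{no} set $G\in\G$ with $X\cap G=\emptyset$. So the plan is to assume such a ``bad'' $G$ exists and reach a contradiction with the $\alpha$-spreadness of $\G'(X)$.

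The key observation is that, because $\G$ is intersecting and $X\cap G=\emptyset$, every $F\in\G'$ with $X\subset F$ satisfies $(F\setminus X)\cap G\neq\emptyset$: indeed $F\cap G\neq\emptyset$, and this intersection cannot lie inside $X$. Equivalently, every member of $\G'(X)$ contains at least one element of $G$, so $|\G'(X)|\le\sum_{g\in G}\bigl|\{H\in\G'(X):g\in H\}\bigr|$. For a fixed $g$ the map $H\mapsto H\setminus\{g\}$ is a bijection from $\{H\in\G'(X):g\in H\}$ onto $\G'(X)(\{g\})$, so $\bigl|\{H\in\G'(X):g\in H\}\bigr|=|\G'(X)(\{g\})|\le\alpha^{-1}|\G'(X)|$ by the spreadness hypothesis applied to the singleton $\{g\}$. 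Summing over $g\in G$ and using $|G|\le m$ (since $\G\subset{[n]\choose\le m}$) gives $|\G'(X)|\le m\,\alpha^{-1}|\G'(X)|$, and since $\alpha>m$ this forces $|\G'(X)|=0$. As $\G'(X)$ is nonempty (this is automatic in every application, where $X$ is produced by violating spreadness and hence is contained in some member of $\G'$), this is the desired contradiction, and the lemma follows.

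I do not anticipate a genuine obstacle: this is a one-line counting estimate once the right objects are in place. The two points that merit care are (i) getting the bookkeeping of the replacement right — what is deleted is exactly the set of supersets of $X$, so the surviving pairwise intersections among old members of $\G$ are inherited for free and only the new set $X$ needs checking; and (ii) the identity $|\{H\in\G'(X):g\in H\}|=|\G'(X)(\{g\})|$, which is the bridge that lets the spread hypothesis enter the bound. Note that the hypothesis $|X|<m$ plays no role in the intersecting conclusion itself; it is there only so that $X$ is a proper subset of each set it replaces, which is what the subsequent peeling step relies on.
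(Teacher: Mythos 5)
Your proof is correct and is essentially the paper's own argument: assume some $G\in\G$ is disjoint from $X$, observe that every set in $\G'(X)$ must then meet $G$, apply the union bound over the at most $m$ elements of $G$ together with $\alpha$-spreadness at singletons, and derive $|\G'(X)|\le (m/\alpha)|\G'(X)|<|\G'(X)|$. You spell out the union-bound bookkeeping and the (harmless) nonemptiness point more explicitly than the paper does, but there is no difference in method.
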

\begin{proof}
Arguing indirectly, assume that a set $F\in \G$ is disjoint from $X$. Then $F$ must intersect every set from $\G'(X)$. However, using $\alpha$-spreadness of $\G'(X)$, we see $\sum_{y\in F}|\G'(X\cup \{y\})|\le \alpha^{-1}|F||\G'(X)|<|\G'(X)|$, a contradiction.
\end{proof}
The peeling procedure is as follows. We put $\mathcal T_k = \ff$ and then for each $i=k,k-1,\ldots, 2$ do the following.
\begin{itemize}
  \item Replace $\mathcal T_i$ by a maximal intersecting family $\mathcal T'_i$.
  \item Put $\mathcal W_i:=\mathcal T'_i\cap {[n]\choose i}$ and $\mathcal T_{i-1}:=\mathcal T'_i\setminus \mathcal W_i$.
\end{itemize}
We note the following properties of this peeling. First, $\mathcal T_i\subset {[n]\choose \le i}$. Second, for any $i$ we have
$$\ff=\ff[\mathcal T_i]\cup \bigcup_{j=i+1}^k\ff[\mathcal W_j].$$
Third, by Lemma~\ref{lemspread2}, there is no $X$ of size $<i$, such that  $\mathcal W_i(X)$ is $r$-spread with $r>i$. Lemma~\ref{lemspread} implies
$$|\mathcal W_i|\le i^i,$$
and thus
$$|\ff(\mathcal W_i)|\le i^i{n-i\choose k-i}=:g(i).$$
Let us compare $g(i)$ and $g(i-1)$ for $i\le k$.
We have
$$\frac{g(i)}{g(i-1)}\le \frac{i^i}{(i-1)^{i-1}}\cdot\frac{k-i}{n-i}\le ei \frac{k-i}{n-i}<\frac{ek^2}{4(n-k)}<\frac 12,$$
since $k\ge 100$ and $n>2(k-1)^2$.
Therefore, we may conclude that, for any $i\ge 1$, we have $$\sum_{j=i}^k g(j)\le 2g(i).$$
Using $n>2(k-1)^2$, we have
$$2g(5) = 2\cdot 5^5{n-5\choose k-5}\le 2\cdot 5^5 \frac{(k-3)(k-4)}{(n-k-1)(n-k-2)}{n-5\choose k-3}\le\frac{2\cdot 5^5}{4(k-1)^2}{n-5\choose k-3}<{n-5\choose k-3}.$$
This implies that $|\cup_{i=5}^k \ff[\mathcal W_i]|<{n-5\choose k-3}$, and thus $\mathcal T_4$ cannot consist of a singleton. Indeed, if this is the case, then  $\gamma(\ff)\le |\cup_{i=5}^k \ff(\mathcal W_i)|<{n-5\choose k-3}$, a contradiction with $\gamma(\ff)>{n-5\choose k-3}$.

Next, we analyze $\mathcal T_4=\mathcal T'_5\cap {[n]\choose \le 4}$. Since $\mathcal T'_5$ is maximal intersecting, there is no $>5$-spread subfamily $\mathcal T_4(X)$. Let us analyze the size of different layers of $\mathcal T_4(X)$, using the previous observation concerning spread subfamilies and that $\mathcal T_4$ is intersecting. It has no singletons, otherwise the family $\mathcal T_4(X)$ consists of that singleton only by the intersection property. The subfamily of $2$-element sets is intersecting and thus can be either a triangle or a star with $\ell$ petals. If $\ell>4$ then each $\le 4$-element set intersecting the star must intersect its center, and we get that all sets in the family $\mathcal T_4$ must contain the center of the star. Therefore, $\mathcal T_4$ can contain at most $4$ sets $B_1,\ldots, B_\ell$ of size $2$ (with $\ell\le 4$). Since $\mathcal T_4$ has no $>5$-spread subfamily, Lemma~\ref{lemspread} implies that it contains at most  $3^5$ sets of size $3$ and $4^5$ sets of size $4$.

Combining all these and using that $2f(5)<{n-5\choose k-3}<{n-3\choose k-3}$ and $n>2(k-1)^2\ge 198(k-1)$, we get that
\begin{align*}|\ff|\le & \sum_{i=1}^\ell|\ff[B_i]|+3^5{n-3\choose k-3}+4^5{n-4\choose k-4}+2f(5)\\
\le & \sum_{i=1}^\ell|\ff[B_i]|+\Big(3^5+\frac{4^5(k-4)}{n-4}+1\Big){n-3\choose k-3}\\
\le & \sum_{i=1}^\ell|\ff[B_i]|+250{n-3\choose k-3}.
\end{align*}
In order to bound $\ff[B_i]$, we use $\tau(\ff)\ge 3$. Namely, for each $i\in[\ell]$ there is a set $F_i$ such that $F_i\cap B_i=\emptyset$, and thus $\ff[B_i]= \cup_{x\in F_i}\ff[B_i\cup \{x\}]$. We have $|\ff[B_i\cup\{x\}]|\le {n-3\choose k-3}$, and thus
$\sum_{i=1}^\ell|\ff[B_i]|\le 4k{n-3\choose k-3}$.
Overall, we get that
$$|\ff|\le (4k+250){n-3\choose k-3}.$$
Comparing this with \eqref{eqc3large}, we see that, for $k\ge 100$,
$e^{-0.5}(k^2-k+1)>50k>4k+250$, and thus $|\ff|<|\mathcal C_3(n,k)|$. This completes the proof in the case $\gamma(\ff)>{n-5\choose k-3}$.
\subsection{The case $\gamma(\ff)\le {n-5\choose k-3}$}
We note that this part of the argument works for any $n>2k\ge 8$.  W.l.o.g. assume that $1$ has the largest degree in $\ff$.  The proof is based on the bipartite switching idea (see the sketch of the proof for more details).  We shall transform our family $\ff$ into another family (denoted by $\ff''$), which will satisfy $\tau(\ff'')=3$, $|\ff''|\ge |\ff|$ (with strict equality in case $\ff''$ is not isomorphic to $\ff$). Moreover, $\ff''(\bar 1)$ will have covering number $2$ and will be minimal with respect to that property.

To that end, take any $\mathcal M=\{M_1,\ldots, M_z\}\subset \ff(\bar 1)$ such that $\tau (\mathcal M)=2$ and $\mm$ is minimal w.r.t. this property. Remark that $z\ge 3$ due to the fact that $\tau(\mm)=2$ and $\mm$ is intersecting. Since $\mm$ is minimal, for each $M_\ell\in \mm$, there is \begin{equation}\label{eqil} i_\ell\in \Big(\bigcap_{M\in \mm\setminus \{M_\ell\}}M\Big),\end{equation}
where $\bigcap_{M\in \mm}M=\emptyset$ since $\tau(\mm)=2$. Fix an arbitrary choice of $i_\ell$ and put $I = \{i_\ell: \ell\in [z]\}$. W.l.o.g. assume that $I = [2,z+1]$. %We assume that the elements, common to all but one set in $\mm$, form a segment $[2,z']$. For each  $i\in[2,z']$ we denote by $M_i$ the set in $\mm$ that avoids $i$ (some $M_i$ may coincide). Note that $z'-1\ge z$.
%The next part of the proof borrows notations and ideas of Theorem~\ref{thmfull2}. For each $l\in[z]$, we can find $i_l$ as in \eqref{eqil}. W.l.o.g., assume that $\{i_1,\ldots, i_z\}=[2,z+1]$.
For each $i\ge 2$, consider the following bipartite graph $G_i$ (with the convention that $[2,1]=\emptyset$). The parts of $G_i$ are
\begin{align*}
\mathcal P_a^{i}:=\ &\Big\{P\ :\  P\in {[2,n]\choose k-1},\ P\cap [2,i]=\{i\}\Big\},\\
\mathcal P_b^{i}:=\ &\Big\{P\ :\  P\in {[2,n]\choose k},\ P\cap [2,i]=[2,i-1]\Big\},
\end{align*}
and edges connect disjoint sets. We identify $\mathcal P_a^i$ with ${[i+1,n]\choose k-2}$ and $\mathcal P_b^i$ with ${[i+1,n]\choose k-i+2}$.

We have $|\mathcal P_b^i\cap \ff(\bar 1)|\le |\ff(\bar 1)|\le {n-5\choose k-3}$. Thus, for each $i=2,\ldots, z+1$ we can apply \eqref{eqcreasy2} to
\begin{align*}
\aaa:=\ff(1)\cap \mathcal P_a^i \ \ \ \ \text{and}\ \ \ \ \
\bb:=\ff(\bar 1)\cap \mathcal P_b^i
\end{align*}
with $a=k-2$, $b=j=k-i+2$, and $n=n-i$. Note that $\bb$ already contains one set $M\in \mm$, and  thus we get $$|\aaa|+|\bb|\le {n-i\choose k-2}-{n-k-2\choose k-2} +1,$$
with a strict inequality unless $|\bb|=1$. We replace $\aaa,\bb$ with $\big\{F\cup \{i\}: F\in {[i+1,n]\choose k-2}, F\cap M\ne \emptyset\big\}$ and $\{M\}$, respectively, getting a new family $\ff_i$. Note that $|\ff_i|\ge |\ff_{i-1}|$, where $\ff_1 :=\ff$ and any such inequality is strict unless the two families on the two sides of the inequality coincide. Moreover, note that $\ff_i$ stays intersecting, since $\ff_i(\bar 1)$ consists of $\mathcal M$ and sets that contain $[2,i]$ entirely. All these sets intersect all the sets newly added to $\ff_i(1)$. We repeat the same exchange for any choice of set of representatives $I$.
 At the end, we get a family $\ff'$ with $\ff'(\bar 1)$ consisting of $\mm$ and some family $\mathcal U\subset \ff(\bar 1)$ of sets that all contain the set $I'$ of all elements that belong to all but $1$ set in $\mathcal M$. Indeed, should it contain another set, say $X$, which does not contain an element $i$, then the corresponding family $\mathcal B:=\ff(\bar 1)\cap \mathcal P_b^i$ would contain at least $2$ sets: $X$ and one of the sets of $\mathcal M$. This contradicts the fact that we performed the exchange for $i$. Let us w.l.o.g. assume that  $I' = [2,t]$.  By the above, we also get $|\ff'|>|\ff|$ unless $\ff'$ is isomorphic to $\ff$.

%Since $|\ff(\bar 1)|\le {n-5\choose k-4}$, we may apply the same exchange argument via $G'_i$ as in Theorem~\ref{thmfull2} and get a family $\ff'$, such that $\ff'(\bar 1) = \mm\cup \mathcal U$, where $\mathcal U$ contains only sets that contain $[2,z+1]$, and $\ff(1)$ consists of all sets intersecting all sets in $\ff'(\bar 1)$. %Since, for any $i\in [2,z+1]$, there are many sets $F\in \ff'(1)$ such that $F\cap [2,z+1]=\{i\}$, one can easily conclude that $\tau (\ff')\ge 3$.\footnote{If $z\ge 4$, then this actually implies that any hitting set of size $3$ must pass through $1$.}
%We repeat the same exchange with any element contained in all but at one set from $\mathcal M$. At the end, we may assume that each $U\in\mathcal U$ contains $[2,t']$ for $t'\ge z+1$. As in the proof of Theorem~\ref{thmfull2}, we get $|\ff'|>|\ff|$ unless $\ff'$ is isomorphic to $\ff$ (recall that $k\ge C$ by assumption).

If $\mm$ is isomorphic to $\mathcal T_2(k)$ (cf. \eqref{deft2}), then the number of elements contained in exactly two sets (all but one sets) is $k+1$, and thus we may immediately conclude that $\mathcal U=\emptyset$: no $k$-set can contain a subset of size $k+1$. Otherwise, $\mm$ is not isomorphic to $\mathcal T_2(k)$.\footnote{This is only needed for the uniqueness of the extremal family $\mathcal C_3(n,k)$, since some of the exchanges we shall perform below may not necessarily strictly increase the size. But this does not pose problems since we will eventually arrive at a family $\ff''$ with $\ff''(\bar 1)=\mm$, which we will show to have size strictly smaller than that of $\mathcal C_3(n,k)$.}

Let us show that we may continue the transformations and assure that $\mathcal U$ is empty. It is clear if $t\ge k+2$: again, no $k$-element set can contain a $(k+1)$-element set as a subset. Otherwise, consider the family $\mm':=\{M\setminus [2,t]\ :\  M\in\mm\}$ and note that sets in $\mm'$ have size at least $1$. If there is no element $i'\in [t+1,n]$ that is contained in at least $2$ sets of $\mm'$, then take two elements $i\in M'$ and $j\in M''$, where $t+1\le i<j\le n$ and $M',M''$ are distinct sets in $\mathcal M'$, and perform the $(i,j)$-shift on $\ff'$. Only two sets in $S_{ij}(\mathcal M)$ will contain $i$, and thus  $\tau(S_{ij}(\mathcal M))=2$.  Moreover,
$S_{ij}(\ff')$ is intersecting due to the properties of shifting. Thus, we may replace $\ff'$ with $S_{ij}(\ff')$ and $\mm$ with $S_{ij}(\mm)$.

Now, we assume that there is an element in $i'\in [t+1,n]$ that is contained in at least $2$ sets of $\mm'$. %W.l.o.g., assume that $i'=t+1$.
Take a hitting set\footnote{That is, $I$ such that $I\cap M\ne \emptyset$ for any $M\in \mm'$.}  $I$ for $\mathcal M'$ of size at most $z-1$ and that contains $i'$. Note that such $I$ exists since $|\mathcal M'(\bar{i'})|\le z-2$.
 \begin{obs}\label{obslast} Consider a set $X$ such that $i'\notin X$ and $M\setminus X\ne \emptyset$ for any $M\in  \mm'$. Then there is such hitting set $I$ for $\mathcal M$ that is additionally disjoint from $X$.
  \end{obs} Indeed,  we may form $I$ by including $i'$ and one element from each of $M\setminus X$ for $M$'s that do not contain $i'$. %For each $l\in [z]$, select one element $i_l\in M_l\cap [t+1,n]$, with the condition that we select $i'$ for each set that contains it. (That is, $i_l$ may coincide.) Put $I:=\{i_l\ :\ l\in [z]\}$ and note $|I|\le z-1.$

Consider the bipartite graph $G(t,I)$ with parts
\begin{align*}
\mathcal P_a(t,I):=\ &\Big\{P \ :\ P\in {[2,n]\choose k-1},\ I\subset P,\  [2,t]\cap P=\emptyset\Big\},\\
\mathcal P_b(t,I):=\ &\Big\{P\ :\ P\in {[2,n]\choose k},\ [2,t]\subset P,\ I\cap P=\emptyset\Big\},
\end{align*}
and edges connecting disjoint sets. Put $Y = [t+1,n]\setminus I$. We identify $\mathcal P_a(t,I)$ with ${Y\choose k-|I|-1}$ and $\mathcal P_b(t,I)$ with ${Y\choose k-t+1}$. We have $t-1\ge z\ge |I|+1$, and, therefore, we may apply  \eqref{eqcreasy} to
\begin{align*}
\aaa:=\ff(1)\cap \mathcal P_a(t,I) \ \ \ \ \text{and}\ \ \ \ \
\bb:=\ff(\bar 1)\cap \mathcal P_b(t,I)
\end{align*} with $a:=k-|I|-1,\ b:=k-t+1$  and conclude that $|\aaa|+|\bb|\le {|Y|\choose k-|I|-1}$.
Replacing $\aaa$ with $\mathcal P_a(t,I)$ and $\bb$ with $\emptyset$ does not decrease the sum of sizes of the families and preserves the intersecting property of the family. (Here we also note that, by the choice of $I$, we have $|\mm\cap \mathcal P_b(t,I)| =\emptyset$.)

 Recall that $\mathcal U = \{F\in \ff(\bar 1): [2,t]\subset F\}$. We perform the same exchange operations for all possible choices of $I$.  We conclude that $\mathcal U$ does not contain sets that avoid $I$, for any allowed transversal $I$. The sets $F$ in $\mathcal U$ thus fall into two categories. First, $F$ may contain $i'$. Second, if $F$ does not contain $i'$, it must contain some $M\in \mm'$  by Observation~\ref{obslast}. (Otherwise, we find a hitting set $I$ that avoids $F$, and thus $F\in \bb$ in the above terms.) The latter is, however, impossible, since it would again imply that a $k$-element set from $\mathcal U$ contains a $(\ge k+1)$-element set $M\cup [2,t]$.

W.l.o.g., assume that $i' = t+1$. Therefore, we may assume that all sets in $\mathcal U$ contain $i'=t+1$, and thus all contain $[2,t+1]$. Next, we may perform similar exchange operations. Let us prepare the setup first. Slightly abusing notation, consider the family $\mm':=\{M\setminus [2,t+1]\ :\  M\in\mm\}$ and consider all possible transversals $I$ for $\mm'$ of size at most $z$. %: select
%one element $i_l\in M_l\cap [t+2,n]$. (Again, $i_l$ may coincide.) Put $I:=\{i_l\ :\ l\in [z]\}$ and note $|I|\le z.$
Consider the same bipartite graph with parts $\mathcal P_a(t+1, I)$ and $\mathcal P_b(t+1,I)$ and do the same exchange operations. The only condition we needed to obey is that on uniformity, which is $t\ge z+1\ge |I|+1$ in this case. (The reason it works now is the extra fixed element in $\mathcal P_b'(t+1,I)$, which makes the number of fixed elements in $\mathcal P_b(t+1,I)$  at least as big as that in $\mathcal P_a(t+1,I)$.)

Repeating this for all possible choices of $I$, we arrive at the family $\ff''$ and a situation where any set from $\ff''(\bar 1)\setminus\mm$ must intersect {\it any} such set $I$. By an analogue of Observation~\ref{obslast}, this is only possible for a set $F$ if $F\supset M\cup [2,t+1]$ for $M\in \mm'$. But this implies that $|F|>k$, which is impossible. Thus $\ff''(\bar 1) = \mm$, and so $\mathcal U$ is empty.
Moreover, $|\ff''|\ge |\ff|$ and it is not difficult to check $\tau (\ff'')=3$. %In what follows, we abuse notation and reuse the letter $\ff$ to denote $\ff''$.

Finally, we need to show that, among all {\it minimal} families $\mathcal M$, the choice of $\mathcal T_2(k)$ is the unique optimal. But this is a direct application of the second part of Lemma~\ref{lemmin} with $s=k$ and $[2,n]$ playing the role of $[m]$. Note that $n>2k$, and $m\ge 2k$. The proof of Theorem~\ref{thmtau3} in case $\gamma(\ff)\le {n-5\choose k-3}$ is complete.\\

{\sc Acknowledgements:} I would like to thank Peter Frankl for introducing me to the area and for numerous interesting discussions we had on the topic. The research was supported by the Ministry of Economic Development of the Russian Federation (agreement with MIPT No. 139-15-2025-013, dated June 20, 2025, IGK 000000C313925P4B0002).

\end{document}